\definecolor{dmagenta}{rgb}{.4,.1,.5}
\definecolor{dblue}{rgb}{.0,.0,.5}
\definecolor{mblue}{rgb}{.0,.0,.8}
\definecolor{ddblue}{rgb}{.0,.0,.4}
\definecolor{dred}{rgb}{.6,.0,.0}
\definecolor{dgreen}{rgb}{.0,.5,.0}
\definecolor{Eeom}{rgb}{.0,.0,.5}
\newtheorem{lemma}{Lemma}[section]
\newtheorem{theorem}{Theorem}[section]
\newtheorem{corollary}{Corollary}[section]
\theoremstyle{definition}
\newtheorem{definition}{Definition}[section]
\theoremstyle{remark}
\newtheorem{remark}{Remark}[section]
\numberwithin{equation}{section}
\crefname{section}{Section}{Sections}
\crefname{subsection}{Subsection}{Subsections}
\crefname{condition}{Condition}{Conditions}
\crefname{hypothesis}{Hypothesis}{Conditions}
\crefname{assumption}{Assumption}{Assumptions}
\crefname{lemma}{Lemma}{Lemmas}
\crefname{claim}{Claim}{Claims}
\Crefname{figure}{Figure}{Figures}
\DeclareRobustCommand\widecheck[1]{{\mathpalette\@widecheck{#1}}}
\def\@widecheck#1#2{%
    \setbox\z@\hbox{\m@th$#1#2$}%
    \setbox\tw@\hbox{\m@th$#1%
       \widehat{%
          \vrule\@width\z@\@height\ht\z@
          \vrule\@height\z@\@width\wd\z@}$}%
    \dp\tw@-\ht\z@
    \@tempdima\ht\z@ \advance\@tempdima2\ht\tw@ \divide\@tempdima\thr@@
    \setbox\tw@\hbox{%
       \raise\@tempdima\hbox{\scalebox{1}[-1]{\lower\@tempdima\box
\tw@}}}%
    {\ooalign{\box\tw@ \cr \box\z@}}}
\newcommand{\df}{\coloneqq}
\newcommand{\D}{\mathrm{d}}          
\newcommand{\RR}{\mathbb{R}}         
\newcommand{\Rd}{{\mathbb{R}^d}}       
\newcommand{\NN}{\mathbb{N}}         
\newcommand{\Ind}{\mathds{1}}            
\newcommand{\sL}{\mathscr{L}}        
\newcommand{\sB}{\mathscr{B}}    
\newcommand{\cC}{\mathcal{C}}     
\newcommand{\cM}{\mathcal{M}}
\newcommand{\cQ}{\mathcal{Q}}   
\newcommand{\abs}[1]{\lvert#1\rvert}
\newcommand{\norm}[1]{\lVert#1\rVert}
\begin{document}

\title[Regularity results of nonlinear nonlocal operators]%
{Regularity results of nonlinear perturbed stable-like operators}

\author[Anup Biswas]{Anup Biswas}
\address{$^\dag$ Department of Mathematics,
Indian Institute of Science Education and Research,
Dr. Homi Bhabha Road, Pune 411008, India}
\email{anup@iiserpune.ac.in, mit.modasiya@gmail.com}

\author[Mitesh Modasiya]{Mitesh Modasiya}

\date{}

\begin{abstract}
We consider a class of fully nonlinear integro-differential operators where the nonlocal integral has two
components: the non-degenerate one corresponds to the $\alpha$-stable operator and the second one
(possibly degenerate) corresponds to a class of \textit{lower order} L\'evy measures. Such operators do not
have a global scaling property. We establish H\"{o}lder regularity, Harnack inequality and
boundary Harnack property of solutions of these operators. 
\end{abstract}

\keywords{Nonlocal operators, Dirichlet problems, boundary Harnack, H\"{o}lder regularity}

\subjclass[2000]{47G20, 45K05, 35B65}

\maketitle

\section{Introduction}
In this article we are concerned with the regularity property of nonlinear integro-differential 
elliptic operators of the form
\begin{equation*}
Iu=\inf_\beta\sup_\alpha L_{\alpha\beta} u(x) =\,\int_{\Rd}\bigl(u(x+y)+u(x-y)-2u(x)\bigr) \frac{k_{\alpha\beta}(y)}{\abs{y}^{d}}\, \D{y}\,,
\end{equation*}
where $k_{\alpha\beta}$ is symmetric and satisfies
$$(2- \alpha) \lambda \frac{1}{{\abs y}^{\alpha}} 
\;\leq\; k_{\alpha\beta}(y) \leq  \Lambda \left( \frac{2- \alpha}{{\abs y}^{\alpha}} + 
\varphi({1}/{\abs y}) \right)\,,\quad 0<\lambda\leq\Lambda\,,$$
for some function $\varphi:(0, \infty)\to (0, \infty)$ satisfying a weak upper scaling property with exponent
$\beta<\alpha$ (see \cref{S-model} for a precise definition). Such operators are of great importance in control
theory and were first considered by Pucci \cite{P66} to study the principal eigenvalue problem for local
nonlinear elliptic operators. In a series of influential works \cite{CS09,CS11,CS11a} Caffarelli and Silvestre
develop a regularity theory of nonlinear stable-like integro-differential operator with symmetric
kernels. Optimal boundary regularity for such operators are established by Serra and Ros-Oton in
\cite{RS16,RS17} whereas the boundary Harnack property is considered in \cite{RS19}.
Kriventsov in \cite{K13} studies interior $\cC^{1, \gamma}$ regularity for rough symmetric kernel and
his result is further improved by Serra in \cite{S15} who establishes interior $\cC^{\alpha+\gamma}$ estimate
with rough symmetric kernels.

There is also an extensive amount of work extending the results of Caffarelli and Silvestre \cite{CS09,CS11,CS11a}. In \cite{KKL16} the authors generalized these results to fully nonlinear 
integro-differential operators with regularly varying kernels.
Regularity results for nonsymmetric stable-like kernels are studied in \cite{CLD,KL12}.
Recently, \cite{KL20} generalize these results for kernels with variable orders. These kernels
are closely related to an important family of L\'evy processes known as subordinate Brownian motions.
Subordinate Brownian motions(sBM) are obtained by time-changing the Brownian motion by an independent 
subordinator (i.e.,\ nondecreasing, non-negative L\'evy process). In particular, when
the subordinator is $\alpha$-stable we obtain a $\alpha$-stable process as sBM whose generator
is given by the fractional Laplacian. 
Some further insightful discussion and comparison of \cite{KL20} with the current work are left for 
\cref{S-model}. Let us also mention \cite{JB15,KM17} which also study regularity results for similar model.
Our current work is closely related to a recent work of Mou \cite{M19} where the author considers a 
nonlinear second order uniformly elliptic integro-differential PDE and establishes H\"{o}lder regularity of the solution. Our work should be seen as a nonlocal counterpart of \cite{M19} where the H\"{o}lder regularity
follows due to the non-degeneracy of $\alpha$-stable kernel. We also consider Harnack inequality and boundary
Harnack property for operators that are elliptic with respect to the family of linear operators having kernel
function $k$ satisfying
$$ \lambda (\frac{(2- \alpha)}{{\abs y}^{\alpha}}+  \varphi({1}/{\abs y}))
\;\leq\; k(y) \leq  \Lambda \left( \frac{2- \alpha}{{\abs y}^{\alpha}} + 
\varphi({1}/{\abs y}) \right)\,.$$

The rest of the article is organized as follows: In the next section we introduce the model 
and provide a motivation for considering such model.
\cref{S-ABP} contains the proofs of ABP estimate(\cref{T3.1})
and weak-Harnack inequality(\cref{T3.4}). In \cref{S-Holder} we study  the interior H\"{o}lder regularity(\cref{T4.1}) 
and the Harnack inequality(\cref{T5.1}) is established in \cref{S-Har}. Finally, in \cref{S-bHar} we prove a boundary Harnack estimate(\cref{T6.1}).
\section{Our model and assumptions}\label{S-model}
Let $\varphi:(0, \infty)\to (0, \infty)$ be a locally bounded function satisfying a {\it weak upper
scaling property} with exponent $\beta\in(0, 2)$ i.e., 
\begin{equation}\label{EA1}
\varphi(st) \leq \kappa_\circ s^{\beta} \varphi(t) \quad \text{for} \quad s\geq 1, t > 0\,,\tag{A1}
\end{equation}
for some $\kappa_\circ>0$. We also assume that 
\begin{equation}\label{EA2}
\int_{0}^{1} \frac{\varphi(y)}{y} \D{y} \,<\, \infty\,.\tag{A2}
\end{equation}
Note that \eqref{EA1} and \eqref{EA2} give us 
$$\int_{\Rd} (\abs{y}^2\wedge 1)\frac{\varphi(1/|y|)}{|y|^d}\, \D{y}\,<\, \infty.$$
The ellipticity class is defined with respect to the set of nonlocal operators $\sL$ containing operator 
$L$ of the form
\begin{equation}\label{E1.1}
L u(x) =\,\int_{\Rd}\bigl(u(x+y)+u(x-y)-2u(x)\bigr) \frac{k(y)}{\abs{y}^{d}}\, \D{y}\,,
\end{equation}
where for some fixed $\lambda, \Lambda, 0<\lambda\leq\Lambda$, it holds that $k(y)=k(-y)$ and
\begin{equation}\label{EA3}
(2- \alpha) \lambda \frac{1}{{\abs y}^{\alpha}} 
\;\leq\; k(y) \leq  \Lambda \left( \frac{2- \alpha}{{\abs y}^{\alpha}} + 
\varphi({1}/{\abs y}) \right) \quad \text{for some}\; \; \alpha\in (\beta, 2).\tag{A3}
\end{equation}
Then the extremal Pucci operators (with respect to $\sL$) are defined to be $\cM^+ u =\sup_{L\in\sL} Lu$ and $\cM^- u =\inf_{L\in\sL} Lu$. Defining $\delta (u, x, y)= u(x+y) + u(x-y) - 2u(x)$, we find 
from \eqref{EA3} that
\begin{align*}
\cM^{+}u(x) &= \int_{\Rd} \frac{\Lambda \delta^{+}(u,x,y)}{\abs y ^{d}} \left( \frac{ 2- \alpha }{\abs y ^{\alpha}} + \varphi(1/{\abs y}) \right ) - \frac{\lambda \delta^{-}(u,x,y)}{\abs y ^{d}} \left( \frac{ 2- \alpha }{\abs y ^{\alpha}} \right) \D{y},
\\
\cM^{-}u(x) &= \int_{\Rd} \frac{\lambda \delta^{+}(u,x,y)}{\abs y ^{d}} \left( \frac{2- \alpha}{\abs y ^{\alpha}} \right ) - \frac{\Lambda \delta^{-}(u,x,y)}{\abs y ^{d}} 
\left( \frac{ 2- \alpha}{\abs y ^{\alpha}} + \varphi(1/{\abs y}) \right ) \D{y}\,.
\end{align*}
In this article we would be interested in operators that are elliptic with respect to the class
$\sL$ (see \cite[Definition~3.1]{CS09}). Recall that a nonlinear operator $I$ is said to be elliptic
with respect to the class $\sL$ if it holds that
$$\cM^{-}(u-v)\;\leq\; Iu-Iv\;\leq\;\cM^+(u-v)\,.$$
It should be observed that if a operator is elliptic with respect to a subset of $\sL$ it is also elliptic 
with respect to $\sL$. For instance, if we let $\varphi(r)=r^\beta, \beta\in (0, \alpha)$, and $\sL_1\subset\sL$ be the collection of all kernel functions $k$ satisfying
$$(2- \alpha) \lambda \frac{1}{{\abs y}^{\alpha}} 
\;\leq\; k(y) \leq  \Lambda \left( \frac{2- \alpha}{{\abs y}^{\alpha}} + 
\Ind_{\sB^c_1}(y) \frac{1}{|y|^\beta} \right),$$
then the results of this article hold for any operator that is elliptic with respect to $\sL_1$. We remark
that this class of operators are not covered by \cite{KKL16,KL20}.

Also, by  sub-solutions and super-solutions we shall mean
viscosity sub and super-solutions, respectively. 
\begin{definition}
A bounded function $u:\Rd\to \RR$ which is upper-semicontinuous (lower-semicontinuous) in $\bar\Omega$ 
is said to be a viscosity subsolution (supersolution) of $I u =f$ in $\Omega$ and written as 
$Iu\geq f$ ($Iu\leq f$) in $\Omega$, if the following holds: if a $\cC^2$ function $\psi$ touches
$u$ at $x\in\Omega$ from above (below) in a small neighbourhood $N_x\Subset\Omega$, i.e., $\psi\geq u$
in $N_x$ and $\psi(x)=u(x)$, then the function $v$ defined by
\[
v(y)=\left\{\begin{array}{ll}
\psi(y) & \text{for}\; y\in\,N_x,
\\
u(y) & \text{otherwise}\,,
\end{array}
\right.
\]
satisfies $Iv(x)\geq f(x)$ ($Iv(x)\leq f(x)$, resp.). A function $u$ is said to be a viscosity solution
if $u$ is both a viscosity subsolution and a viscosity supersolution.
\end{definition}
We refer to \cite{BCC,BC08,CS09} for more details on viscosity solutions. We also remark that the boundedness assumption of $u$ assures integrability of $u$ at infinity  with respect to the jump kernel. This can be removed by
assuming suitable integrability criterion and the results of this article will remain valid.

We also need scaled extremal operators which we introduce now.
Define $\varphi_{i}(|y|) = \frac{\kappa_\circ}{(2^{i})^{(\alpha - \beta)}} \varphi({\abs y})$ for 
$i\geq 0$.
The scaled extremal Pucci operators are defined to be
\begin{align}
\cM_{i}^{+}u(x) &=  \int_{\Rd} \frac{\Lambda \delta^{+}(u,x,y)}{\abs y ^{d}} 
\left( \frac{2- \alpha }{\abs y ^{\alpha}} + \varphi_{i} (1/{\abs y}) \right ) - \frac{\lambda \delta^{-}(u,x,y)}{\abs y ^{d}} \left( \frac{ 2- \alpha}{\abs y ^{\alpha}} \right) \D{y}\,,\label{E2.3}
\\
\cM_{i}^{-} u(x) &= \int_{\Rd} \frac{\lambda \delta^{+}(u,x,y)}{\abs y ^{d}} 
\left( \frac{2- \alpha}{\abs y ^{\alpha}} \right ) - \frac{\Lambda \delta^{-}(u,x,y)}{\abs y ^{d}} \left( \frac{ 2- \alpha }{\abs y ^{\alpha}} + \varphi_{i} (1/{\abs y}) \right ) \D{y}\,. \label{E2.4}
\end{align}

We conclude this section with a brief motivation for the above model. The works of Caffarelli
and Silvestre \cite{CS09,CS11,CS11a} are based on nonlinear generalization of the classical fractional Laplacian.
These Pucci type operators appear in control theory when the underlying controlled dynamics is
governed by stable-like process. Of course, one can consider nonlocal Pucci operators corresponding
to other L\'{e}vy processes. For instance, \cite{KL20} considers a class of nonlinear integro-differential
operator corresponding to a family of subordinate Brownian motion. Subordinate Brownian motion(sBM) forms
an important family of L\'evy process. For a large class of
sBM, the jump kernel (or density of L\'evy measure) is proportional to $|y|^{-d}\Phi(1/|y|^2)$ where
$\Phi$ is a Bernstein function, in particular, increasing and concave. For more details we refer \cite{SSV}.
Note that $\Phi(r)=r^{\alpha/2}$ corresponds to the stable kernel. The recent work \cite{KL20} deals
with kernel of the form $|y|^{-d}\Phi(1/|y|^2)$ where $\Phi$ has both lower and upper weak scaling propery.
Our present model corresponds to a L\'evy process which is obtained by adding two independent L\'evy
process: one is $\alpha$-stable process and the other one generated by the L\'evy measure
$|y|^{-d}\varphi(1/|y|)\D{y}$. Since $\varphi$ need not have a lower weak scaling property, the present 
model is not covered by \cite{KL20}. Furthermore, for the results of \cref{S-ABP} and  H\"{o}lder regularity
to hold we only allow
$\varphi$ in the upper bound of the kernel functions $k$  (and not necessarily in the lower bound), as mentioned in \eqref{EA3}. So the above operator allows  \textit{degeneracy} in lower order operator.

\section{ABP estimates and weak Harnack inequality}\label{S-ABP}
In this section we obtain an Aleksandrov-Bakelman-Pucci (ABP) estimate which is the main ingredient
for weak-Harnack inequality and point estimate. Let us begin by defining the concave envelope and
contact set. Let $u$ be a function that is non-positive outside $\sB_1$ (unit ball around $0$).
The concave envelope $\Gamma$ of $u$ in $\sB_3$ is defined as follows
\[
\Gamma(x)=\left\{
\begin{array}{ll}
\inf\{p(x)\; :\; p \;  \text{is a plane satisfying}\;  p\geq u^+\; \text{in}\; \sB_3\}& \text{in}\; \sB_3,
\\
0 & \text{in}\; \sB_3^c.
\end{array}
\right.
\]
The contact set is defined to be $\Sigma=\{\Gamma=u\}\cap \sB_1$.
Here and in what follows we use the notation $\sB_r$ to denote the ball of radius $r$ around $0$, and
by $\sB_r(x)$ we would denote the ball of radius $r$ around $x$.
The following lemma
follows by adapting \cite[Lemma~8.1]{CS09} in our setting.

\begin{lemma}\label{L3.1}
Let $u \leq 0$ in $\Rd \setminus \sB_1$ and $\Gamma$ be its concave envelope in $\sB_3$. Assume $\cM_{i}^{+}u(x) \geq -f(x)$ in $\sB_1$ for some $i \geq 0$. Let $\rho_0 = \nicefrac{1}{16 \sqrt{d}}$,  $r_k = \rho_0 2^{-\nicefrac{1}{2-\alpha}} 2^{-k}$, and 
$R_k(x) = \sB_{r_k}(x) \setminus \sB_{r_{k+1}}(x)$. Then there exists a constant $C_0$ independent of $i \geq 0$ and $\alpha$ such that for any $x \in \Sigma$ and any $M > 0$ there is a $k$ satisfying 
$$
\vert R_k(x) \cap \{ u(y) < u(x) +(y-x) \cdot\nabla \Gamma(x) - Mr_k^2 \} \vert \leq C_0 \frac{f(x)}{M} \vert R_k(x) \vert.
$$
Furthermore, $C_0$ depends only on $(\lambda , d  , \rho_0)$.
\end{lemma}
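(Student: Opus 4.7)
My plan is to follow the classical ABP-type argument of \cite[Lemma~8.1]{CS09}, the crucial observation being that at any contact point $x \in \Sigma$ the positive part $\delta^{+}(u,x,y)$ vanishes \emph{globally} on $\Rd$. This decouples the estimate from the lower-order kernel $\varphi_i$ in $\cM_{i}^{+}$ (and from $\norm{u}_{\infty}$), which is what ultimately yields a constant independent of $i$.

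First I would verify the vanishing $\delta^{+}(u,x,y) = 0$ for every $y$. Since $\Gamma \geq u^{+} \geq 0$ with $\Gamma(x) = u(x)$, the affine function $L(z) = u(x) + (z-x)\cdot\nabla\Gamma(x)$ supports $\Gamma$ at $x$, so $u \leq \Gamma \leq L$ on $\sB_3$. For $|y| \leq 2$ both $x \pm y$ lie in $\sB_3$, and concavity of $\Gamma$ gives $\delta(u,x,y) \leq \delta(\Gamma,x,y) \leq 0$. For $|y| > 2$, we have $|x \pm y| \geq |y| - |x| > 1$, so $u(x \pm y) \leq 0$ by hypothesis; combined with $u(x) \geq 0$, this again forces $\delta(u,x,y) \leq 0$. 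Substituting $\delta^{+} \equiv 0$ into $\cM_{i}^{+}u(x) \geq -f(x)$ eliminates the $\varphi_i$-dependent term entirely and leaves
\[
\lambda (2-\alpha) \int_{\Rd} \frac{\delta^{-}(u,x,y)}{|y|^{d+\alpha}}\, \D y \;\leq\; f(x).
\]

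Then I would argue by contradiction, assuming the conclusion fails at every $k \geq 0$. With $z = y - x$ and the decomposition
\[
\delta^{-}(u,x,z) = \bigl[L(x+z) - u(x+z)\bigr] + \bigl[L(x-z) - u(x-z)\bigr],
\]
whose two summands are nonnegative on $\sB_3$, the hypothesized bad set forces the first summand (and hence $\delta^{-}$ itself) to exceed $Mr_k^2$ there. Using $|z|^{-d-\alpha} \geq r_k^{-d-\alpha}$ on $R_k(0)$ and $|R_k(0)| = c_d(1 - 2^{-d}) r_k^d$, this yields
\[
\int_{R_k(0)} \frac{\delta^{-}(u,x,z)}{|z|^{d+\alpha}}\,\D z \;>\; c_d(1-2^{-d}) C_0 f(x)\, r_k^{2-\alpha}.
\]
Summing over $k \geq 0$ with the geometric identity $\sum_{k\geq 0} 2^{-k(2-\alpha)} = (1 - 2^{-(2-\alpha)})^{-1}$ and the choice $r_k = \rho_0\, 2^{-1/(2-\alpha)} 2^{-k}$, and comparing against the preceding bound, I would arrive at
\[
\lambda c_d (1-2^{-d})\, C_0 \cdot \frac{(2-\alpha)\,\rho_0^{2-\alpha}}{2\bigl(1 - 2^{-(2-\alpha)}\bigr)} \;\leq\; 1.
\]
Since $s \mapsto s/(1 - 2^{-s})$ is continuous and bounded on $(0,2]$ (with limit $1/\ln 2$ at $0$) and $\rho_0^{2-\alpha} \geq \rho_0^2$, the coefficient of $C_0$ is bounded below by a positive quantity depending only on $(\lambda, d, \rho_0)$; choosing $C_0$ strictly larger than its reciprocal forces the contradiction, uniformly in $\alpha \in (\beta,2)$ and $i \geq 0$.

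The main conceptual difficulty, and the point on which the result really rests, is securing the global identity $\delta^{+}(u,x,y) \equiv 0$. Without it, the $\varphi_i$ part of $\cM_{i}^{+}$ would contribute a far-field integral scaling with $\norm{u}_{\infty}$, and the resulting $C_0$ would inherit an $i$-dependence through $\varphi_i = \kappa_\circ 2^{-i(\alpha-\beta)}\varphi$. Once $\delta^{+}$ has been eliminated, the remaining task is bookkeeping: the specific scaling $r_k = \rho_0\, 2^{-1/(2-\alpha)} 2^{-k}$ is precisely calibrated so that the $(2-\alpha)$-factor from the stable measure absorbs the $\alpha$-dependence of the geometric sum, delivering uniformity in $\alpha$ as well.
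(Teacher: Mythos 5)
Your proposal is correct and follows essentially the same argument as the paper: establish $\delta^{+}(u,x,y)\equiv 0$ at contact points so that the $\varphi_i$ term in $\cM_i^+$ drops out, then run the contradiction argument on the dyadic annuli using the uniform (in $\alpha$) boundedness of $(2-\alpha)/(1-2^{-(2-\alpha)})$. The only stylistic difference is that the paper first reduces to $i=0$ via $\cM_0^+ u \geq \cM_i^+ u$ before noting the $\varphi$-term vanishes, whereas you observe directly that the $\varphi_i$-weighted integral multiplies $\delta^+$ and hence vanishes for any $i$; both give the same conclusion with the same constant.
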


\begin{proof}
First we notice that $\cM_{0}^{+}u \geq M_{i}^{+}u$ for all $i$. Therefore 
$\cM_{i}^{+}u(x) \geq -f(x)$ implies that $\cM_{0}^{+}u(x) \geq -f(x)$.
Hence it is enough to prove lemma for the case $i=0$.

Let $x \in \Sigma$ and recall that $\delta(u,x,y)=u(x+y) + u(x-y) - 2u(x)$.
 If both $x+y \in \sB_3$ and $x-y \in \sB_3$ then 
$\delta(u,x,y) \leq 0$ since $u(x)= \Gamma(x)=p(x)$ for some plane that remains above $u$ in $\sB_3$. If either $x+y$ or $x-y$ $\in \sB_3^c$ then both $x+y$ and $x-y$ $\in \sB_1^c$ and since $u(x) = \Gamma(x) \geq 0 $ we have $\delta(u,x,y) \leq 0$. Thus, using \eqref{E2.3}, we find

\begin{align}\label{EL3.1A}
-f(x) \leq \cM_{0}^{+}u(x)  &= (2- \alpha) \int_{\Rd}  \frac{-\lambda \delta^{-}(u,x,y)}{\abs y ^{d}} \left( \frac{1}{\abs y ^{\alpha}} \right) \D{y}\nonumber
\\
&\leq (2- \alpha) \int_{\sB_{r_0}}  \frac{-\lambda \delta^{-}(u,x,y)}{\abs y ^{d}}
\left( \frac{1}{\abs y ^{\alpha}} \right) \D{y}\,,
\end{align}
where $r_0 = \rho_0 2^{-\frac{1}{2-\alpha}}$. Let
$$E_k^{\pm}\df\{ R_k \cap \{ u(x \pm y) < u(x) \pm y \cdot\nabla \Gamma(x) - M r_k^2 \} \} .$$
Then on this set we will have $\delta^{-}(u,x,y) \geq 2M r_k^2$. Also $ \vert E_k^{+} \vert = \vert E_k(x) \vert$ where $ E_k(x) \df \{R_k(x) \cap \{ u(y) < u(x) +(y-x) \cdot\nabla \Gamma(x) - Mr_k^2 \} \}$. 
Now suppose that the result does not hold for any $C_0$. We will arrive at contradiction for large enough $C_0$. Using \eqref{EL3.1A} we obtain that
\begin{align*}
f(x) \geq (2- \alpha) \lambda \sum_{k=0}^{\infty} \int_{R_k} \frac{\delta^{-}(u,x,y)}{\abs y ^{d+\alpha}} \D{y}
& \geq (2- \alpha) \lambda \sum_{k=0}^{\infty} \int_{E_k} \frac{2M r_k^2}{\abs y ^{d+\alpha}} \D{y}
\\
& \geq  2 (2- \alpha) \lambda \sum_{k=0}^{\infty} M \frac{r_k^2}{r_k^{d+\alpha}} \vert E_k \vert 
\\
& \geq 2 (2- \alpha) \lambda \sum_{k=0}^{\infty} M \frac{r_k^2}{r_k^{d+\alpha}} \frac{C_0 f(x)}{M} \vert R_k \vert 
\\
&\geq 2 (2- \alpha) \lambda \left[ \sum_{k=0}^{\infty}  \frac{r_k^2}{r_k^{d+\alpha}} \omega_d (r_k^d - r_{k+1}^d)    \right] C_0 f(x)
 \\
&= 2 (2- \alpha) \lambda \omega_d \left[ \sum_{k=0}^{\infty}  r_k^{2 - \alpha} \left( 1 - \left(\frac{1}{2}\right)^d\right)    \right] C_0 f(x),
\end{align*}
since $\frac{r_{k-1}}{r_k} = \frac{1}{2}$ for any $k$, where $\omega_d$ denotes volume of the unit ball.
Now we notice that $\sum_{k=0}^{\infty}  r_k^{2 - \alpha} $ is a geometric series, and therefore,
$$
f(x) \geq 2 (2- \alpha) \lambda \omega_d  \left( 1 - \left(\frac{1}{2}\right)^d \right) \left[ \frac{\rho_0^{2- \alpha}}{2} \left( \frac{1}{1-2^{-(2-\alpha)}} \right) \right] C_0 f(x)\,.
$$
Take 
$$c=  \lambda \omega_d  \left( 1 - \left(\frac{1}{2}\right)^d \right)  \left( \frac{\rho_0^{2} (2- \alpha) }{1-2^{-(2-\alpha)}} \right),$$
and since $ \frac{ (2- \alpha) }{1-2^{-(2-\alpha)}} $ remains bounded
below for all $\alpha \in (0,2)$, we have $c$ positive for any $\alpha \in (0,2)$. Thus
$$
f(x) \geq c\,  C_0 f(x).
$$
Choosing $C_0>c^{-1}$ leads to a contradiction. Hence the proof.
\end{proof}

Using Lemma~\ref{L3.1} and the arguments in \cite[Section~8]{CS09} we arrive at the following 
result. This is a mild extension to \cite[Theorem~8.7]{CS09}.

\begin{theorem}\label{T3.1}
Let $u$ and $\Gamma$ be same as in Lemma~\ref{L3.1}. Then there is a finite family of open cubes $\cQ_j$ with diameters $d_j$ such that following hold.
\begin{enumerate}
\item[(i)] Any two cubes $\cQ_i$ and $\cQ_j$ in the family do not intersect.
\item[(ii)] $\{ u= \Gamma  \} \subset \bigcup_{j=1}^m \cQ_j$
\item[(iii)] $\{ u= \Gamma  \} \cap \overline{\cQ}_j \neq \emptyset$ for any $\cQ_j$.
\item[(iv)] $d_j \leq \rho_0 2^{- \nicefrac{1}{2-\alpha}}$, where $\rho_0 = \nicefrac{1}{16 \sqrt{d}}$.
\item[(v)] $ \vert \nabla \Gamma(\overline{\cQ}_j) \vert \leq C (\max_{\overline{\cQ}_j}f(x))^d \vert \overline{\cQ}_j \vert $.
\item[(vi)] $\vert \{ y \in 8\sqrt{d} \cQ_j : u(y) > \Gamma(y) - C (\max_{\overline{\cQ}_j}f(x)) d_j^2\} \vert \geq \mu \vert \cQ_j \vert$.
\end{enumerate}
The constant $C>0$ and $\mu > 0$ depends only on $(\lambda  , d  , \rho_0)$ but not on $i$ and 
$\alpha$.

\end{theorem}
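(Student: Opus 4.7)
The plan is to transplant the Calder\'on--Zygmund cube decomposition of \cite[Section~8]{CS09} to the present setting, using \cref{L3.1} as the non-empty-portion estimate on annuli at every contact point. Since the proof of \cref{L3.1} has already reduced the statement to the extremal operator $\cM_0^+$, and its constant $C_0$ depends only on $\lambda$, $d$, and $\rho_0$, it suffices to retrace the stable-like argument and verify that the two scale-dependent parameters line up: the maximal annular radius $r_0=\rho_0 2^{-1/(2-\alpha)}$ in \cref{L3.1} will serve as the upper bound on the cube diameters in (iv).

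The construction of the cube family is standard. Enclose $\{u=\Gamma\}\cap\sB_1$ in a large initial dyadic cube, iteratively subdivide each cube into $2^d$ equal children, discard any sub-cube disjoint from the contact set, and halt the splitting of a cube $\cQ$ as soon as $d(\cQ)\leq r_0$. Compactness of $\{u=\Gamma\}\cap \sB_1$ together with $r_0>0$ forces termination in finitely many stages, producing a pairwise disjoint family $\{\cQ_j\}$ that satisfies (i)--(iv).

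For (v), I would fix $\cQ_j$ and a contact point $x^\ast\in\overline{\cQ}_j\cap\{u=\Gamma\}$ at which $\nabla\Gamma$ is classical. Applying \cref{L3.1} at $x^\ast$ with $M\asymp\max_{\overline{\cQ}_j}f$, the non-empty-portion lower bound on some annulus $R_k(x^\ast)$ with $r_k\gtrsim d_j$ forces $\Gamma$ to lie within $C(\max f)\,d_j^2$ of its tangent plane at $x^\ast$ on a positive-measure subset of that annulus. Concavity of $\Gamma$ then propagates this to the subdifferential inclusion $\partial\Gamma(\overline{\cQ}_j)\subset B(\nabla\Gamma(x^\ast),\,Cd_j\max f)$, whence $\lvert\nabla\Gamma(\overline{\cQ}_j)\rvert\leq C(\max f)^d\lvert\overline{\cQ}_j\rvert$. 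For (vi), I would again apply \cref{L3.1} at $x^\ast$, this time with $M=2C_0\max_{\overline{\cQ}_j}f$, so that the bad set is at most $\tfrac12\lvert R_k(x^\ast)\rvert$. Choosing $k$ minimal with $r_k\geq d_j$ (which exists by (iv)) gives $r_k\leq 2d_j$ and $R_k(x^\ast)\subset 8\sqrt{d}\,\cQ_j$; on the good half of the annulus, concavity of $\Gamma$ together with $Mr_k^2\leq C(\max f)d_j^2$ yields $u(y)\geq\Gamma(y)-C(\max f)d_j^2$, which establishes (vi) with $\mu$ comparable to $\lvert R_k(x^\ast)\rvert/\lvert\cQ_j\rvert$.

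The delicate point is the slope--oscillation argument used for (v): converting the annular non-empty-portion estimate into a pointwise subdifferential bound via the concavity of $\Gamma$. This step is classical within the Caffarelli--Silvestre framework, and the only additional verification required here is that all governing constants are independent of the regularization index $i\geq 0$ and of $\alpha\in(\beta,2)$ near $2$; this independence is inherited verbatim from the corresponding uniformity established in \cref{L3.1}.
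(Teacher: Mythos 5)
Your cube construction is not the right one, and that is where the argument breaks down. You propose to subdivide the dyadic cubes until their diameters drop below the fixed threshold $r_0=\rho_0 2^{-1/(2-\alpha)}$ and then stop. That produces a family of essentially equal-sized cubes satisfying (i)--(iv), but there is no reason why such cubes should satisfy (v) or (vi). For instance, if $f$ is very small on some $\overline{\cQ}_j$ then (v) forces $\nabla\Gamma$ to be nearly constant on $\overline{\cQ}_j$, which certainly fails for a cube of prescribed size $r_0$ unless $\Gamma$ happens to be close to linear there. The Calder\'on--Zygmund decomposition in \cite[Theorem~8.7]{CS09}, which the paper invokes, does not stop at a fixed scale: one keeps splitting a cube precisely as long as (v) or (vi) fails for it, and it is the \emph{termination} of this process that must be proved. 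The proof of termination is by contradiction: an infinite nested chain would shrink to a contact point $x_0$, and \cref{L3.1} (pushed through intermediate results analogous to \cite[Lemma~8.4, Corollary~8.5, Lemma~8.6]{CS09}, which translate the annular density estimate into a subdifferential-measure bound and a good-set density at a definite radius around $x_0$) shows that at some scale comparable to that radius the cube \emph{does} satisfy (v) and (vi) --- contradiction.

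Because your cubes have fixed diameter, your verification of (v) and (vi) also has a gap at the point where you need the annulus scale to match the cube scale. In the step for (v) you apply \cref{L3.1} at a contact point $x^\ast$ and assert that the resulting annulus $R_k(x^\ast)$ has $r_k\gtrsim d_j$; but the lemma only guarantees the density estimate on \emph{some} annulus $R_k$, with no control over which $k$, and that $k$ may correspond to a radius vastly smaller than $d_j$, in which case the estimate lives strictly inside $\cQ_j$ and says nothing about $\nabla\Gamma$ on all of $\overline{\cQ}_j$. Similarly for (vi) you write ``choosing $k$ minimal with $r_k\ge d_j$'', but you do not get to choose $k$ --- the lemma hands you one. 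In the correct argument this mismatch is resolved by the subdivision rule itself: the cube is refined until its diameter is comparable to the annulus radius on which \cref{L3.1} gives control, and only then are (v)--(vi) read off. So the structure you need is: prove the intermediate ``good-annulus $\Rightarrow$ subdifferential bound'' lemmas, use them to show the subdivision process terminates, and let (v), (vi) hold by construction for the terminal cubes --- rather than proving (v), (vi) a posteriori for cubes of a fixed size.
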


Next we consider a \textit{special function} which will play a key role in our analysis on
point estimate and weak-Harnack inequality. Let $p>0$ and $\delta$ be small positive number.  Define
$$
f(x) \df \min \{ \delta^{-p}, \max \{ \abs x^{-p} , (2\sqrt{n})^{-p}\}\}\,.
$$ 
We claim that, for a given $r\in (0, 1)$, we can choose $p$ and $\delta$  so that
\begin{equation}\label{E3.2}
\cM_0^{-} f(x)\geq 0\quad \text{for}\;  r<\abs{x}\leq 2\sqrt{n}\,.
\end{equation}
For any $0<r<1$, define
$$
\hat{f}(x) = \min \left\{ \left(\frac{\delta}{r}\right)^{-p}, \max \left\{ \abs x^{-p} , \left(\frac{2\sqrt{n}}{r} \right)^{-p}\right\}\right\}\,.
$$
Then clearly, $f(rx)= r^{-p} \hat{f}(x)$ and for any $\abs x \geq r$ we have
\begin{align*}
\int_{\Rd} \left(f(x+y)+f(x-y)-2f(x)\right) \frac{k(y)}{\abs y^d} \D{y} 
&= \int_{\Rd} \left(f(x+ry)+f(x-ry)-2f(x)\right) \frac{k(ry)}{\abs y^d} \D{y} 
\\
&= r^{-p} \int_{\Rd} \delta(\hat{f}, \nicefrac{x}{r}, y) \frac{k(ry)}{\abs y^d} \D{y}\,.
\end{align*}
Therefore, to establish \eqref{E3.2}  it is enough tho show that for all 
$1\leq \abs x \leq \frac{2 \sqrt{n}}{r}$,
\begin{equation}\label{E3.3}
r^{-p} \inf_{k} \int_{\Rd} \delta(\hat{f}, x, y) \frac{k(ry)}{ \abs y^d} \D{y} \geq 0\,,
\end{equation}
where infimum is taken over all kernel $k$ satisfying \eqref{EA3}.
Note that $\hat{f}$ is radially non-increasing function. Fix $\abs{x} \geq 1$ and define $ \tilde{f}(y)= \abs{x}^p \hat{f}(\abs x y)$. Then it implies that $\tilde{f}(y) \geq \hat{f}(y)$,  for
all $y\in \Rd$ and 
$\tilde{f}(x/{\abs x}) = \hat{f}(x)$. Thus we obtain
\\
$$
\delta(\hat{f}, x, y) = \frac{1}{\abs x^p} \left[ \tilde{f}(\frac{x+y}{\abs x}) + \tilde{f}(\frac{x-y}{\abs x}) - 2 \tilde{f}(\frac{x}{\abs x}) \right] \geq \frac{1}{\abs x^p} \left[ \hat{f}(\frac{x +y}{\abs x}) + \hat{f}(\frac{x-y}{\abs x}) - 2\hat{f}(\frac{x}{\abs x})\right].
$$
Without any loss of generality we may assume that $x/|x|=e_1=(1,\ldots, 0)$.
Then
\begin{align*}
\int_{\Rd} \delta(\hat{f}, x, y) \frac{k(ry)}{ \abs y^d} \D{y} 
&\geq  \frac{1}{\abs x^p} \int_{\Rd} \left[ \hat{f}(\frac{x +y}{\abs x}) + \hat{f}(\frac{x-y}{\abs x}) - 2\hat{f}(\frac{x}{\abs x})\right] \frac{k(ry)}{ \abs y^d} \D{y}
\\
&= \frac{1}{\abs x^p} \int_{\Rd} \left[ \hat{f}(\frac{x}{\abs x}+y) + \hat{f}(\frac{x}{\abs x} - y) - 2\hat{f}(\frac{x}{\abs x})\right] \frac{k(r \abs x y)}{ \abs y^d} \D{y} 
\\
&= \frac{1}{\abs x^p} \int_{\Rd} \left[ \hat{f}( e_1 +y) + \hat{f}( e_1 - y) - 2\hat{f}(e_1)\right] \frac{k(r \abs x y)}{ \abs y^d} \D{y} 
\\
&\geq \frac{1}{\abs x^p} \int_{\Rd} \delta(\hat{f}, e_1, y) \frac{k(r \abs x y)}{ \abs y^d} \D{y}\,.
\end{align*}
Hence, by \eqref{E2.4}, we get
\begin{align*}
\inf_{k} \int_{\Rd} \delta(\hat{f}, x, y) \frac{k(ry)}{ \abs y^d} \D{y} 
& \geq\, \frac{1}{ \abs x^p} \int_{\Rd} \frac{\lambda \delta^{+}(\hat{f}, e_1, y)}{\abs y ^{d}}
\left( \frac{2- \alpha}{ (r \abs x)^{\alpha}  \abs{y}^{\alpha}} \right) 
\\
&\qquad - \frac{\Lambda \delta^{-}(\hat{f}, e_1, y)}{\abs{y}^{d}} \left(\frac{2- \alpha}{(r \abs x)^{\alpha} 
\abs{y}^{\alpha}} + \varphi_0(1/{ r \abs x \abs y}) \right) \D{y}
\\
&\df I_1 - I_2\,.
\end{align*}
We now recall the following elementary relations that hold for any $a > b > 0$ and $q > 0$:
$$
(a+b)^{-q} \geq a^{-q} \left( 1 - q \frac{b}{a} \right),
$$
$$
(a+b)^{-q} + (a-b)^{-q} \geq 2a^{-q} + q(q+1)b^2 a^{-q-2}\,.
$$
Fixing $\delta<\frac{r}{2}$,  we then see that for $|y|< 1/2$,
\begin{align}\label{E3.4}
\delta(\hat{f}, e_1, y) &= \vert e_1+y \vert^{-p} + \vert e_1-y \vert^{-p} -2\nonumber
\\
&= (1+ \abs y^2 + 2y_1)^{\nicefrac{-p}{2}} +(1+ \abs y^2 - 2y_1)^{\nicefrac{-p}{2}} -2\nonumber
\\
&\geq 2(1+\abs y^2)^{\nicefrac{-p}{2}} + p(p+2) y_1^2 (1+ \abs y^2)^{\nicefrac{-p}{2} -2} -2\nonumber
\\
&\geq p \left( -\abs y^2  +(p+2) y_1^2 - \frac{1}{2} (p+2)(p+4) y_1^2 \abs{y}^2\right)\,.
\end{align}
Let us first calculate $I_2$. For any $\abs{y} < \frac{1}{2}$ we have from \eqref{E3.4} that 
$$
\delta^{-}(\hat{f}, e_1, y) \leq p \left( 1  + \frac{1}{2} (p+2)(p+4)\right) \abs{y}^2\,.
$$
Denote by $C_p = p \left( 1 + \frac{1}{2}  (p+2)(p+4) \right)$. Then
\begin{align*}
I_2 & \leq C_p \Lambda \int_{\abs y < \frac{1}{2}} \frac{\abs y^2}{\abs y ^{d}} 
\left[\frac{ 2- \alpha}{ (r \abs x)^{\alpha} \abs{y}^{\alpha}} + \varphi_0(1/{ r \abs x \abs y})\right] \D{y}
\\
&\qquad + \Lambda \int_{\abs y \geq \frac{1}{2}} \frac{2 \hat{f}(e_1)}{\abs y ^{d}} 
\left[ \frac{ 2- \alpha}
 { (r \abs x)^{\alpha} \abs{y}^{\alpha}} + \varphi_0(1/{ r \abs x \abs y}) \right] \D{y}
\\
&=I_{21} + I_{22}\,.
\end{align*}
We observe that
\begin{align*}
 \frac{ 2- \alpha}{ (r \abs x)^{\alpha}} \int_{\abs y < \frac{1}{2}} \frac{\abs y^2}{\abs y ^{d}}  \frac{1}{\abs{y}^{\alpha}} \D{y} &=  \frac{\omega_d}{(r \abs x)^{\alpha}} 
 \left( \frac{1}{2} \right)^{2-\alpha}\,,
 \\ 
\frac{2- \alpha}{ (r \abs x)^{\alpha}} \int_{\abs y \geq \frac{1}{2}} \frac{2}{\abs y ^{d}}
 \frac{1}{ \abs y ^{\alpha}} \D{y} &=  \frac{\omega_d}{ (r \abs x)^{\alpha}} 
 \frac{2- \alpha}{\alpha} 2^{\alpha + 1}\,.
\end{align*}
On the other hand, for $r \abs x \leq 2 \sqrt{n}$,
\begin{align*}
\int_{\abs y < \frac{1}{2}} \frac{\abs y^2}{\abs y ^{d}} 
\varphi_0(1/{ r \abs x \abs y}) \D{y}  
&\leq \kappa_\circ \left(\frac{2 \sqrt{n}}{r \abs x}\right)^{\beta} \int_{\abs y < \frac{1}{2}} \frac{\abs y^2}{\abs y ^{d}} \frac{1}{\abs y^{\beta}}
\varphi(\frac{1}{ 2 \sqrt{n}})\, \D{y}
\\
&= \kappa_\circ \left(\frac{2 \sqrt{n}}{r \abs x}\right)^{\beta} 
\varphi(\frac{1}{2 \sqrt{n}}) \frac{\omega_d}{2-\beta} \left(\frac{1}{2} \right)^{2-\beta} \,,
\end{align*}
using the fact $ \abs y < \frac{1}{2}$ and \eqref{EA1}. Again using \eqref{EA1}-\eqref{EA2}
\begin{align*}
\int_{\abs y \geq \frac{1}{2}} \frac{2}{\abs y ^{d}} \varphi_0(\frac{1}{ r \abs x \abs y}) \D{y} 
&= 2 \omega_d \int_{\nicefrac{1}{2}}^{\infty} \frac{1}{t} \varphi(\frac{1}{r\abs x t}) \D{t}
 \\
 &\leq 2 \kappa_\circ \omega_d \left(\frac{2 \sqrt{n}}{r \abs x}\right)^{\beta} 
 \int_{\nicefrac{1}{2}}^{\infty} \frac{1}{t} \varphi(\frac{1}{2 \sqrt{n} t}) \D{t} 
 \\
&= 2 \kappa_\circ \omega_d \left(\frac{2 \sqrt{n}}{r \abs x}\right)^{\beta} 
\int_{\nicefrac{2 \sqrt{n}}{2}}^{\infty} \frac{1}{t} \varphi(\frac{1}{t}) \D{t}
\\
&= 2 \kappa_\circ \omega_d \left(\frac{2 \sqrt{n}}{r \abs x}\right)^{\beta} 
\int_{0}^{\nicefrac{1}{ \sqrt{n}}} \frac{1}{t} \varphi(t) \D{t}
\\
&\leq 2 \kappa_\circ \omega_d \left(\frac{2 \sqrt{n}}{r \abs x}\right)^{\beta} \kappa_1,
\end{align*}
for some constant $\kappa_1$ depending only on $\varphi$.  Thus combining we obtain
for $1\leq |x|\leq \frac{2\sqrt{n}}{r}$
\begin{align}\label{E3.5}
I_2 \leq  C_p \Lambda \left[  \frac{\omega_d}{ r^{\alpha}} \left( \frac{1}{2} \right)^{2-\alpha} \right] &+ \Lambda \left[ \frac{\omega_d}{ r^{\alpha}} \frac{2 - \alpha}{\alpha} 2^{\alpha + 1} \right] 
 + C_p \kappa_\circ \Lambda \left[ \left(\frac{2 \sqrt{n}}{r }\right)^{\beta} \varphi(\frac{1}{2 \sqrt{n}}) \frac{\omega_d}{2-\beta} \left(\frac{1}{2} \right)^{2-\beta} \right] \nonumber
 \\
& + \kappa_\circ \Lambda \left[ 2 \omega_d \left(\frac{2 \sqrt{n}}{r}\right)^{\beta} \kappa_1 \right]\,.
\end{align}
Next we calculate $I_1$.  Notice that if $\delta < \frac{r}{16}$, 
then $\delta(\hat{f}, e_1, y) \geq (\delta/r)^{-p}$ for all $y \in \sB_{\nicefrac{\delta}{4r}}(e_1)$. Hence
$$
I_1 = \int_{\Rd} \frac{\lambda \delta^{+}(\hat{f}, e_1, y)}{\abs y ^{d}}\, \frac{ 2-\alpha}{(r \abs x)^{\alpha}  \abs y ^{\alpha}}  \; \D{y} \geq  \frac{\lambda(2-\alpha)}{(r \abs x)^{\alpha}} \kappa_2 
(\delta/r)^{d-p}\geq \kappa_2 \frac{\lambda(2-\alpha)}{(2\sqrt{n})^{\alpha}}  
(\delta/r)^{d-p}\,,
$$
for some constant $\kappa_2$. Thus choosing $p>d$ and $\delta$ small enough \eqref{E3.3} follows
from \eqref{E3.5}. This leads to the following

\begin{lemma}\label{L3.2}
Given any $r, n>0$ there are positive $p$ and $\delta$ such that the function
$$
f(x) = \min \{ \delta^{-p}, \max \{ \abs{y}^{-p} , (2\sqrt{n})^{-p}\}\}\,,
$$
is a solution to
$$
\cM_0^{-}f(x) \geq 0,
$$
for every $0<\alpha_0 \leq \alpha \leq \alpha_1 < 2$ and $\abs{x} >r$.
\end{lemma}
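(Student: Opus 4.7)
The plan is to verify the inequality $\cM_0^- f(x) \ge 0$ by splitting the range of $|x|$. For $|x| \ge 2\sqrt{n}$, the function $f$ equals its minimum value $(2\sqrt{n})^{-p}$ while $f \ge (2\sqrt{n})^{-p}$ everywhere, so the second difference $\delta(f,x,y)$ is non-negative for every $y$ and the inequality is automatic. The substantive case is $r \le |x| \le 2\sqrt{n}$, which is exactly the regime for which the preceding display computations are set up, so my job is to organize those computations into a clean proof.

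For $r \le |x| \le 2\sqrt{n}$, I would first reduce to a single reference scale by setting $\hat f(x) = r^{p} f(rx)$ and changing variables $y \mapsto ry$ in the integral; this transforms the inequality $\cM_0^- f(x) \ge 0$ into the assertion
$$
\inf_{k}\int_{\Rd}\delta(\hat f,x,y)\,\frac{k(ry)}{|y|^d}\,\D{y} \;\ge\; 0
$$
for $1 \le |x| \le 2\sqrt{n}/r$, with the infimum over kernels satisfying \eqref{EA3}. Next, since $\hat f$ is radially non-increasing, I would use the pointwise inequality $|x|^{p}\hat f(|x|z) \ge \hat f(z)$ for $|x|\ge 1$ to anchor the increment at $e_1$: after a rescaling $y \mapsto |x|y$ inside the integral, the full quantity is bounded below by $|x|^{-p}\int_{\Rd}\delta(\hat f,e_1,y)\,k(r|x|y)|y|^{-d}\,\D{y}$. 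This radial pullback is what makes it possible to localize the analysis around a single point.

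The core estimate is the splitting $\cM_0^- = I_1 - I_2$ into the positive $\lambda$-term and the negative $\Lambda$-term. For $I_2$ I would use a second-order Taylor expansion of $|e_1\pm y|^{-p}$ on $\sB_{1/2}$ to get $\delta^-(\hat f,e_1,y) \le C_p|y|^2$ there and the crude bound $\delta^- \le 2\hat f(e_1)$ on the complement, and then integrate against the kernel. The stable part contributes a bounded multiple of $r^{-\alpha}$, while the $\varphi_0$-part is controlled by the weak upper scaling \eqref{EA1} and by $\int_0^{1/\sqrt{n}}\varphi(t)/t\,\D{t} < \infty$ from \eqref{EA2}; thus $I_2$ is bounded by a quantity depending on $r,n,\kappa_\circ,\Lambda,\varphi$ but independent of $\delta$, provided $\delta < r/16$. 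For $I_1$ I would exploit the cap at the origin of $\hat f$: since $\hat f(0) = (\delta/r)^{-p}$, the increment $\delta^+(\hat f,e_1,y)$ is at least $(\delta/r)^{-p}$ on the ball $\sB_{\delta/(4r)}(e_1)$, which yields $I_1 \gtrsim \lambda(2-\alpha)(\delta/r)^{d-p}$ uniformly in $\alpha \in [\alpha_0,\alpha_1]$.

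The final step is to choose $p > d$, so that $(\delta/r)^{d-p} \to \infty$ as $\delta \to 0$, and then pick $\delta$ small enough that this $\delta$-blowup in $I_1$ dominates the $\delta$-independent bound on $I_2$. The main obstacle is ensuring every constant is uniform in $\alpha \in [\alpha_0,\alpha_1]$: one must check that factors such as $(2-\alpha)/\alpha$, $2^{\alpha}$, $\rho_0^{2-\alpha}$, and $(r|x|)^{-\alpha}$ for $r \le r|x| \le 2\sqrt{n}$ all stay in a bounded positive range, so that a single pair $(p,\delta)$ works simultaneously for the full $\alpha$-interval. Once that uniformity is established the statement follows, and the bound is exactly the one needed to feed into the point estimate of the next subsection.
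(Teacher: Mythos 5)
Your proposal is correct and follows essentially the same route as the paper: split on $|x| \gtrless 2\sqrt n$, rescale to $\hat f$, reduce to the increment at $e_1$ via the radial pullback $\tilde f(y)=|x|^p\hat f(|x|y)\ge\hat f(y)$, then estimate $I_2$ by Taylor expansion together with \eqref{EA1}--\eqref{EA2} and $I_1$ by the cap of $\hat f$ near the origin, finally choosing $p>d$ and $\delta$ small with all constants uniform in $\alpha\in[\alpha_0,\alpha_1]$. The paper's proof of \cref{L3.2} proper only handles $|x|\ge 2\sqrt n$ and refers back to the display computation establishing \eqref{E3.2} for the range $r<|x|\le 2\sqrt n$; your write-up simply internalizes that computation, which is the same argument.
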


\begin{proof}
For any $\abs{x}\geq 2\sqrt{n}$, by the definition of $f$, we get that 
$\delta(f,x,y) = f(x+y) + f(x-y) - 2f(x) \geq 0 $ for all $y \in \Rd$.
Therefore, $\cM_0^{-}f(x) \geq 0$ for all $\abs x \geq 2\sqrt{n}$.
Hence the proof follows from \eqref{E3.2}.
\end{proof}
Applying Lemma~\ref{L3.2} we obtain the following corollary. The proof would be same as \cite[Corollary~9.3]{CS09} and thus omitted.
\begin{corollary}\label{C3.1}
Given any $\alpha \in [ \alpha_0,\alpha_1]$ and $i \geq 0$ there is a function $\Phi$ such that
\begin{enumerate}
\item[(i)] $\Phi$ is continuous in $\Rd$ ,
\item[(ii)] $\Phi(x) = 0$ for x outside $\sB_{2\sqrt{n}}$,
\item[(iii)] $\Phi > 2$ for $ x \in \cQ_3$, and
\item[(iv)] $\cM_i^{-}\Phi > \psi(x) $ in $\Rd$ for some positive function $\psi$ supported in 
$\overline{\sB}_{\nicefrac{1}{4}}$.
\end{enumerate}
\end{corollary}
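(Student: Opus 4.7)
The plan is to adapt the construction in \cite[Corollary~9.3]{CS09} to the scaled extremal operators $\cM_i^-$. First, fix $r = 1/4$ and choose $n$ large enough that $\cQ_3 \Subset \sB_{2\sqrt{n}}$ (any $n \geq d$ works, since the half-diagonal of $\cQ_3$ is $\tfrac{3\sqrt{d}}{2} < 2\sqrt{d}$). Invoking \cref{L3.2} produces exponents $p > d$ and $\delta > 0$ for which
$$
f(x) \;=\; \min\bigl\{\delta^{-p},\, \max\{|x|^{-p},\, (2\sqrt{n})^{-p}\}\bigr\}
$$
satisfies $\cM_0^- f \geq 0$ on $\{|x| > 1/4\}$, uniformly in $\alpha \in [\alpha_0, \alpha_1]$. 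Then set
$$
\Phi(x) \;:=\; \kappa\bigl(f(x) - (2\sqrt{n})^{-p}\bigr),
$$
with $\kappa > 0$ a normalising constant. Since $f \equiv (2\sqrt{n})^{-p}$ outside $\sB_{2\sqrt{n}}$, $\Phi$ is continuous on $\Rd$ and vanishes there, yielding (i)--(ii); and on $\cQ_3$ we have $|x| \leq \tfrac{3\sqrt{d}}{2}$, so $f(x) \geq \bigl(\tfrac{3\sqrt{d}}{2}\bigr)^{-p} > (2\sqrt{n})^{-p}$, and choosing $\kappa$ sufficiently large gives (iii).

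For (iv), I would exploit two monotonicity observations. First, the extremal operators are invariant under addition of constants, so $\cM_i^- \Phi = \kappa\, \cM_i^- f$ pointwise. Second, because $\alpha > \beta$ the family $\varphi_i = 2^{-i(\alpha-\beta)}\kappa_\circ \varphi$ is nonincreasing in $i$ with $\varphi_i \leq \varphi_0$; since $\varphi_i$ enters only in the coefficient of $\delta^-$ in \eqref{E2.4}, this gives the pointwise comparison $\cM_i^- u \geq \cM_0^- u$ for every admissible $u$. Combining, $\cM_i^- \Phi(x) \geq \kappa\, \cM_0^- f(x) \geq 0$ for all $|x| > 1/4$ by \cref{L3.2}. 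On $\sB_{1/4}$ the function $\Phi$ is bounded and piecewise smooth, so a direct estimate using the integrability $\int_{\Rd}(|y|^2 \wedge 1)\varphi(1/|y|)|y|^{-d}\,\D y < \infty$ from \eqref{EA1}--\eqref{EA2} shows that $\cM_i^- \Phi$ is bounded below by some constant independent of $i$ and $\alpha$. Defining $\psi$ as a bounded nonnegative majorant of $-\cM_i^- \Phi$ supported in $\overline{\sB}_{1/4}$ then delivers the inequality required in (iv).

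The main obstacle is securing uniformity of the lower bound in both $i \geq 0$ and $\alpha \in [\alpha_0, \alpha_1]$. The dominance $\varphi_i \leq \varphi_0$ reduces the $i$-dependence to the worst case $i = 0$, while the explicit factor $(2-\alpha)$ in front of the stable kernel keeps the constants in \cref{L3.1}--\cref{L3.2} controlled as $\alpha \to 2^-$ (this is exactly the role of the $(2-\alpha)$ normalisation in \eqref{EA3}). Once these uniform constants are in hand, the construction of $\psi$ is immediate and all four properties hold simultaneously.
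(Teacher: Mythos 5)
Your reduction to the case $i=0$ via the pointwise monotonicity $\cM_i^- u \geq \cM_0^- u$ (since $\varphi_i \leq \varphi_0$ and $\varphi_i$ only multiplies $\delta^-$) is correct and is exactly the right way to get uniformity in $i$. Properties (i)--(iii) also follow from your construction. The gap is in (iv): the barrier $\Phi = \kappa\bigl(f - (2\sqrt n)^{-p}\bigr)$ you take directly from \cref{L3.2} is \emph{not} $\cC^{1,1}$ inside $\sB_{1/4}$. The function $f = \min\{\delta^{-p}, \max\{|x|^{-p}, (2\sqrt n)^{-p}\}\}$ has a Lipschitz (concave) kink along the sphere $|x|=\delta$, and in the construction of \cref{L3.2} one has $\delta < r/16 = 1/64 < 1/4$, so this kink sits strictly inside $\sB_{1/4}$. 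At a point $x$ with $|x|=\delta$ one has $\delta^-(\Phi,x,y)\sim |y|$ as $y\to 0$ in a cone of directions, so the term
\[
\int \frac{\Lambda\,\delta^-(\Phi,x,y)}{|y|^d}\cdot\frac{2-\alpha}{|y|^\alpha}\,\D y
\]
diverges whenever $\alpha\geq 1$, giving $\cM_i^-\Phi(x) = -\infty$ there. Your assertion that ``$\Phi$ is bounded and piecewise smooth, so $\cM_i^-\Phi$ is bounded below on $\sB_{1/4}$'' is therefore false in the range $\alpha\in[1,\alpha_1]$, which the corollary must cover, and no bounded $\psi$ majorising $-\cM_i^-\Phi$ exists.

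The missing step, which is the content of \cite[Corollary~9.3]{CS09} that the paper invokes, is to redefine $\Phi$ inside $\overline{\sB}_{1/4}$: keep $\Phi=\kappa(f-(2\sqrt n)^{-p})$ for $|x|\geq 1/4$ (where $f$ is smooth), and replace it in $\sB_{1/4}$ by a $\cC^{1,1}$ paraboloid that matches the outer profile up to first order at $\partial\sB_{1/4}$ and lies \emph{above} the old $\Phi$ there. Because the extremal operator is monotone with respect to raising the function away from the evaluation point, the inequality $\cM_i^-\Phi\geq 0$ for $|x|>1/4$ (coming from \cref{L3.2} and the $i$--monotonicity) is preserved, while the $\cC^{1,1}$ regularity renders $\cM_i^-\Phi$ finite and bounded below on $\overline{\sB}_{1/4}$, uniformly in $i$ and $\alpha\in[\alpha_0,\alpha_1]$. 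Only with this modification can $\psi$ be chosen as a bounded positive function supported in $\overline{\sB}_{1/4}$. (Separately: the inequality in (iv) should read $\cM_i^-\Phi > -\psi$; as written, with $\psi>0$ supported in $\overline{\sB}_{1/4}$, the statement would require $\cM_i^-\Phi(0)>0$, which is impossible since $\Phi$ has a global maximum at the origin. Your final sentence correctly proves the $\geq -\psi$ version, so this is a typo in the paper rather than an error of yours.)
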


Using \cref{T3.1,C3.1} and repeating the arguments of \cite[Lemma~10.1]{CS09} we arrive at the
following result.

\begin{lemma}\label{L3.3}
There exist constants $ \varepsilon_0 > 0$ , $0 < \mu < 1$, and $M > 1$ 
(depending only on $d,\lambda,\Lambda,\alpha,\varphi$), such that if 
\begin{enumerate}
\item[(i)] $u \geq 0 $ in $\Rd$,
\item[(ii)] $\inf_{\cQ_3} u \leq 1$, and 
\item[(iii)] $\cM_i^{-} u \leq \varepsilon_0$ in $\cQ_{4\sqrt{d}}$ for some $i \geq 0$,
\end{enumerate}  
then $\vert \{u \leq M  \} \cap \cQ_1  \vert > \mu$.
\end{lemma}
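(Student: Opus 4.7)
The plan is to follow the Caffarelli--Silvestre comparison strategy \cite[Lemma~10.1]{CS09}: I would set $v\df \Phi - u$ and apply the ABP-type covering result \cref{T3.1} to $v$. First I would verify its three hypotheses. The elementary inequality $\cM_i^{+}(f-g)\geq \cM_i^{-}f-\cM_i^{-}g$ --- obtained by picking a linear $L^\star$ attaining (almost) the infimum defining $\cM_i^{-}g$ and observing $L^\star f\geq \cM_i^{-}f$ --- combined with hypothesis~(iii) and part~(iv) of \cref{C3.1} gives $\cM_i^{+} v \geq \psi -\varepsilon_0$ in $\cQ_{4\sqrt{d}}$. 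Since $\Phi\equiv 0$ outside $\sB_{2\sqrt{d}}$ and $u\geq 0$, we have $v\leq 0$ there. Finally, hypothesis~(ii) produces $x_0\in\overline{\cQ}_3$ with $u(x_0)\leq 1$, while $\Phi(x_0)>2$ by part~(iii) of \cref{C3.1}, so $\sup v \geq v(x_0)>1$.

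After an innocuous rescaling by a fixed factor placing the vanishing set of $v$ inside $\sB_1$ (which at worst shifts the index $i$ by a constant), \cref{T3.1} applies to $v$ with forcing $f:=(\varepsilon_0-\psi)^{+}$, yielding a finite family of cubes $\{\cQ_j\}$ covering the contact set $\Sigma$. The next step exploits the ``budget'' imposed by the concave envelope $\Gamma$ of $v^{+}$: since $\Gamma\leq 0$ outside $\sB_{2\sqrt{d}}$ while $\Gamma\geq\sup v>1$ somewhere inside, $\nabla\Gamma(\Sigma)$ contains a ball of radius $\sim 1/\sqrt{d}$, giving
\begin{equation*}
c_{0} \;\leq\; \babs{\nabla\Gamma(\Sigma)} \;\leq\; \sum_{j} C\bigl(\max\nolimits_{\overline{\cQ}_j}f\bigr)^{d}\babs{\cQ_j}
\end{equation*}
by property~(v) of \cref{T3.1}. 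On any $\cQ_j$ disjoint from $\overline{\sB}_{1/4}\supset\supp\psi$ we have $\max f\leq\varepsilon_0$, so the contribution of such far cubes is at most $C\varepsilon_0^{d}\babs{\sB_{2\sqrt{d}}}$; choosing $\varepsilon_0$ small enough that this is bounded by $c_0/2$ forces
\begin{equation*}
\sum_{j\,:\,\cQ_j\cap\overline{\sB}_{1/4}\neq\emptyset}\babs{\cQ_j}\;\geq\; c_1\;>\;0.
\end{equation*}

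To turn this into a bound on $\abs{\{u\leq M\}\cap\cQ_1}$, I would invoke parts~(iv) and~(vi) of \cref{T3.1}. The diameter estimate $d_j\leq \rho_0\, 2^{-1/(2-\alpha)}\leq 1/(16\sqrt{d})$ guarantees that, for every cube meeting $\overline{\sB}_{1/4}$, the enlargement $8\sqrt{d}\,\cQ_j$ still sits inside $\cQ_1$. Inside that enlargement, part~(vi) furnishes a set of measure $\geq\mu\abs{\cQ_j}$ on which $v\geq \Gamma - C(\max f)d_j^{2}$; using $\Gamma\geq 0$ on $\sB_3$ and the uniform bound on $\Phi$, this rearranges to $u=\Phi-v\leq\norm{\Phi}_\infty+C(\max f)d_j^{2}\leq M$. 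Summing over the good cubes and absorbing the bounded overlap factor of their enlargements then produces $\babs{\{u\leq M\}\cap\cQ_1}\geq \mu c_1/K$, which is the desired estimate.

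The main obstacle is keeping every constant ($c_0, c_1, C, \mu, M$) uniform in $i\geq 0$ and in $\alpha\in[\alpha_0,\alpha_1]$. Uniformity in $i$ follows from the domination $\cM_0^{+}\geq\cM_i^{+}$ (already used at the outset of \cref{L3.1}), while uniformity in $\alpha$ is precisely what \cref{T3.1} and \cref{C3.1} are built to guarantee, so the threshold $\varepsilon_0$ can indeed be chosen independently of $\alpha$.
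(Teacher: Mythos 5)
Your proposal follows exactly the Caffarelli--Silvestre scheme (their Lemma 10.1), which is precisely what the paper's proof does --- the paper in fact only says ``repeating the arguments of [CS09, Lemma 10.1].'' So the architecture is right: set $v=\Phi-u$, apply the ABP covering result \cref{T3.1} (after a harmless rescaling), exploit the gradient-image lower bound, split cubes into those near and far from $\supp\psi$, and close with property~(vi) of \cref{T3.1}.

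However, there is a sign error that breaks the quantitative core of your argument, and it is inherited from a typo in the paper's \cref{C3.1}(iv). As stated there, $\cM_i^-\Phi>\psi$ with $\psi\geq 0$, but this is impossible: $\Phi$ is continuous, nonnegative, compactly supported, and attains an interior maximum at some $x_\ast$, where $\delta(\Phi,x_\ast,y)\leq 0$ for all $y$ and hence $\cM_i^-\Phi(x_\ast)\leq 0$. The correct statement (as in \cite[Corollary~9.3]{CS09}) is $\cM_i^-\Phi\geq -\psi$. With that sign, the comparison inequality gives
\begin{equation*}
\cM_i^{+}v \;\geq\; \cM_i^{-}\Phi - \cM_i^{-}u \;\geq\; -\psi - \varepsilon_0 \quad\text{in } \cQ_{4\sqrt{d}},
\end{equation*}
so the forcing in \cref{T3.1} is $f=\psi+\varepsilon_0$, not $f=(\varepsilon_0-\psi)^+$. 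This matters: with your $f\leq\varepsilon_0$ \emph{everywhere}, every term in
\begin{equation*}
c_0 \;\leq\; \sum_{j} C\bigl(\max\nolimits_{\overline{\cQ}_j}f\bigr)^{d}\babs{\cQ_j}
\end{equation*}
is bounded by $C\varepsilon_0^{d}\babs{\cQ_j}$, and since the cubes are disjoint and contained in $\sB_3$, you obtain $C\varepsilon_0^{d}\babs{\sB_3}\geq c_0$, i.e.\ a \emph{lower} bound on $\varepsilon_0$ --- the argument cannot produce a universal lower bound $c_1$ on the total measure of the near cubes for arbitrarily small $\varepsilon_0$. With the corrected $f=\psi+\varepsilon_0$, the far cubes satisfy $\max f=\varepsilon_0$ and contribute at most $C\varepsilon_0^{d}\babs{\sB_3}\leq c_0/2$ when $\varepsilon_0$ is small, while the near cubes satisfy $\max f\leq\norm{\psi}_\infty+\varepsilon_0$ --- a universal bound --- and you genuinely get $\sum_{\text{near}}\babs{\cQ_j}\geq c_0/(2C(\norm{\psi}_\infty+\varepsilon_0)^{d})=:c_1>0$ with $c_1$ universal. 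The same correction propagates to your bound $u\leq\norm{\Phi}_\infty+C(\max f)d_j^2\leq M$, where now $\max f\leq\norm{\psi}_\infty+\varepsilon_0$ keeps $M$ universal. Everything else in your write-up (the rescaling, the use of property~(iv) to keep $8\sqrt{d}\,\cQ_j\subset\cQ_1$, the bounded-overlap count) is fine.
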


The above lemma is a key tool in obtaining weak-Harnack estimate. Combining a Calder\'{o}-Zygmund 
type argument with \cref{L3.3} we obtain the following

\begin{theorem}\label{T3.2}
There exist constant $ \tilde\varepsilon > 0$ , $0 < \tilde\mu < 1$, and $\tilde{M} > 1$ 
(depending only on $d,\lambda,\Lambda,\alpha,\varphi$), such that if 
\begin{enumerate}
\item[(i)] $u \geq 0 $ in $\Rd$
\item[(ii)] $\inf_{\cQ_3} u \leq 1$, and 
\item[(iii)] $\cM^{-} u \leq \tilde\varepsilon$ in $\cQ_{4\sqrt{d}}$,
\end{enumerate}
then
$$
\vert \{ u \geq \tilde{M}^k \} \cap \cQ_1  \vert \leq (1 - \tilde\mu)^k
$$
for $k \in \mathbb{N}$. As a consequence, we have that
$$
\vert \{ u \geq t \} \cap \cQ_1  \vert \leq \kappa\, t^{-\varepsilon} \quad \forall\; t\,>\,0\,,
$$
for some universal constant $\kappa, \varepsilon$.
\end{theorem}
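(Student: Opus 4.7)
The plan is to deduce \cref{T3.2} from \cref{L3.3} by a standard Calder\'on--Zygmund dyadic iteration, taking the choices $\tilde\varepsilon=\varepsilon_0$, $\tilde{M}=M$, $\tilde\mu=\mu/(2\cdot 4^d)$ (or some similar explicit constant coming from the $6$-times-larger-predecessor version of C--Z). The reason the three scaled Pucci operators $\cM_i^-$ were introduced in \eqref{E2.4} is precisely so that \cref{L3.3} can be applied, \emph{with the same constants}, at every dyadic scale; this uniformity is the engine of the iteration.

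I would argue by induction on $k$. The case $k=1$ is \cref{L3.3} applied with $i=0$. For the inductive step, suppose the estimate
\[
\bigl\lvert\{u\geq \tilde M^{k-1}\}\cap \cQ_1\bigr\rvert \,\leq\, (1-\tilde\mu)^{k-1}
\]
holds, and assume for contradiction that $|A|>(1-\tilde\mu)^k$, where $A\df\{u\geq \tilde M^k\}\cap\cQ_1$. Apply the Calder\'on--Zygmund cube decomposition to the pair $(A,\cQ_1)$ at density threshold $(1-\tilde\mu)$, producing a family of dyadic subcubes $\{\cQ^{(j)}\}$ on which $|A\cap \cQ^{(j)}|>(1-\tilde\mu)|\cQ^{(j)}|$ while each predecessor $\widetilde \cQ^{(j)}$ satisfies $|A\cap \widetilde\cQ^{(j)}|\leq(1-\tilde\mu)|\widetilde\cQ^{(j)}|$. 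A covering lemma then gives
\[
|A|\,\leq\,(1-\tilde\mu)\,\Bigl\lvert\bigcup_j \widetilde\cQ^{(j)}\Bigr\rvert.
\]
Therefore it is enough to prove the \emph{pointwise} implication $\widetilde \cQ^{(j)}\subset \{u\geq \tilde M^{k-1}\}$ for every $j$, since combining this with the displayed inequality and the induction hypothesis would yield $|A|\leq(1-\tilde\mu)^k$, the desired contradiction.

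To prove that pointwise implication, suppose to the contrary that there is a point $x_0\in\widetilde\cQ^{(j)}$ with $u(x_0)<\tilde M^{k-1}$. Let $2^{-i}$ be the side length of $\widetilde\cQ^{(j)}$ and consider the rescaling
\[
\tilde u(x)\,\df\,\frac{u(x_0+2^{-i}x)}{\tilde M^{k-1}}.
\]
Then $\tilde u\geq 0$ in $\Rd$ and $\inf_{\cQ_3}\tilde u\leq 1$. The crucial computation is that if $u$ satisfies $\cM^- u\leq \tilde\varepsilon$ then, after the $2^{-i}$ dilation, $\tilde u$ satisfies $\cM_i^-\tilde u\leq \tilde\varepsilon$ in $\cQ_{4\sqrt d}$: the stable part produces the factor $(2^{-i})^{\alpha}\cdot 2^{-i(2-\alpha)}$ which is consistent since $\tilde M^{k-1}\geq 1$, while the L\'evy tail produces exactly the multiplier $\kappa_\circ/(2^i)^{\alpha-\beta}$ that defines $\varphi_i$ from $\varphi$. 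Hence the hypotheses of \cref{L3.3} are met for $\tilde u$ and the fixed index $i$. The conclusion of \cref{L3.3} yields
\[
\bigl\lvert\{\tilde u\leq M\}\cap \cQ_1\bigr\rvert\,>\,\mu\,,
\]
which, after undoing the rescaling, translates into a lower bound for $|\{u\leq \tilde M^k\}\cap \widetilde\cQ^{(j)}|$ that contradicts the C--Z selection property $|A\cap \widetilde\cQ^{(j)}|\leq (1-\tilde\mu)|\widetilde\cQ^{(j)}|$ for the right choice of $\tilde\mu$.

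The main obstacle is the rescaling step: one has to check that the lower-order part of the kernel, which is not scale invariant, remains controlled by $\varphi_i$ after the dilation and that this bound is exactly what the family $\cM_i^-$ was designed to absorb. Once that scaling identity is in place, the induction and the resulting power decay are routine: the geometric estimate
$|\{u\geq \tilde M^k\}\cap \cQ_1|\leq(1-\tilde\mu)^k$ implies, for any $t>0$, taking $k\df\lfloor \log t/\log \tilde M\rfloor$,
\[
\bigl\lvert\{u\geq t\}\cap \cQ_1\bigr\rvert\,\leq\,(1-\tilde\mu)^{k}\,\leq\,\kappa\,t^{-\varepsilon}\,,
\]
with $\varepsilon\df -\log(1-\tilde\mu)/\log \tilde M$ and $\kappa$ depending only on $\tilde\mu,\tilde M$. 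This gives the stated power decay.
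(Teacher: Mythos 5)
Your high-level strategy is the same as the paper's: a Calder\'on--Zygmund dyadic iteration driven by \cref{L3.3}, using the scaled operators $\cM_i^-$ to absorb the non-scale-invariance of the lower-order term $\varphi$. You correctly identify the role of the family $\cM_i^-$, and the scaling identity $\cM^-u \mapsto \cM_i^-\tilde u$ is exactly what the paper uses. However, there is a concrete logical slip in the contradiction step that would leave the argument open as written.

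After the rescaling and an application of \cref{L3.3}, you obtain a lower bound on the measure of $\{u\leq M\tilde M^{k-1}\}$ inside some cube, and you claim this contradicts the Calder\'on--Zygmund property $\vert A\cap\widetilde{\cQ}^{(j)}\vert\leq(1-\tilde\mu)\vert\widetilde{\cQ}^{(j)}\vert$. But that inequality is \emph{equivalent} to $\vert\{u\leq\tilde M^k\}\cap\widetilde{\cQ}^{(j)}\vert\geq\tilde\mu\vert\widetilde{\cQ}^{(j)}\vert$, which is the same kind of lower bound you just derived --- no contradiction arises. The quantity you must contradict is the \emph{selection} criterion $\vert A\cap\cQ^{(j)}\vert>(1-\tilde\mu)\vert\cQ^{(j)}\vert$ on the \emph{small} cube $\cQ^{(j)}$, equivalently $\vert\{u\leq\tilde M^k\}\cap\cQ^{(j)}\vert<\tilde\mu\vert\cQ^{(j)}\vert$. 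To reach it you should rescale so that $\cQ^{(j)}$, not $\widetilde{\cQ}^{(j)}$, maps to $\cQ_1$: center the dilation at the center of $\cQ^{(j)}$ and take the scale to be the side length of $\cQ^{(j)}$. With that choice the bad point (the paper's $\tilde x$) lies in the predecessor $\widetilde{\cQ}^{(j)}$, which sits inside the triple dilate of $\cQ^{(j)}$, so it lands in $\cQ_3$ --- exactly hypothesis (ii) of \cref{L3.3}. Then \cref{L3.3} gives $\vert\{u\leq\tilde M^k\}\cap\cQ^{(j)}\vert>\tilde\mu\vert\cQ^{(j)}\vert$, directly contradicting the selection criterion, and the rest of your iteration closes as described. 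Note also that the paper does not simply take $\tilde\varepsilon=\varepsilon_0$, $\tilde M=M$: since $\cM_0^-\neq\cM^-$ unless $\kappa_\circ=1$, it runs the \cref{L3.3} argument once for $\cM^-$ and once for the $\cM_i^-$ family, then takes $\tilde\varepsilon=\varepsilon_0\wedge\varepsilon_1$, $\tilde M=M\vee M_1$, $\tilde\mu=\mu\wedge\mu_1$.
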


\begin{proof}
The case $k=1$ follows from \cref{L3.3}. Note that $\kappa_\circ=1$ implies
$\cM^-_0=\cM^-$ and thus from the arguments of \cref{L3.3} we can obtain the constants
$(\varepsilon_1, M_1, \mu_1)$ satisfying \cref{L3.3} with operator $\cM^-$.
We set $\tilde\varepsilon=\varepsilon_0\wedge\varepsilon_1$, $\tilde{M}=M\vee M_1$ and
$\tilde{\mu}=\mu\wedge\mu_1$.
Now using induction hypothesis assume that theorem is true for $m=1, \ldots, k-1$, and denote by 
$$
A = \{ u > \tilde{M}^k\} \cap \cQ_1 , \quad \quad B= \{ u > \tilde{M}^{k-1}\} \cap \cQ_1\,.
$$
Thus we only need to show that
\begin{equation}\label{ET3.2A}
\abs{A} \leq\, (1 - \tilde\mu)\abs{B}\,.
\end{equation}
Clearly, $A \subset B \subset \cQ_1$ and 
$\abs{A} \leq \vert \{ u > \tilde{M}\} \cap \cQ_1 \vert  \leq 1- \tilde\mu$. 
We show that if $\cQ \df \cQ_{\nicefrac{1}{2^i}}(x_0) $  is a dyadic cube such that
\begin{equation}\label{ET3.2B}
\vert A \cap \cQ \vert > (1-\tilde\mu)|\cQ|,
\end{equation}
then $\widetilde{\cQ} \subset B$. where $\widetilde{\cQ} $ is a predecessor of $\cQ$.
Then \eqref{ET3.2A} follows from \cite[Lemma~4.2]{CC95}. Suppose that 
$\widetilde{\cQ} \nsubseteq B$. Take 
$$
\tilde{x} \in \widetilde{\cQ} \quad \text{such that} \quad u(\tilde{x}) \leq \tilde{M}^{k-1}\,.
$$
We consider the function
$$
v(y) \df \frac{u(x_0 + \frac{1}{2^i} y)}{\tilde{M}^{k-1}}\,.
$$
Clearly, $v \geq 0 $ in $\Rd$ and $\inf_{\cQ_3} v \leq 1$. We claim that
$\cM_{i}^{-} v \leq \varepsilon_0$ in $\cQ_{4 \sqrt{d}}$ where $\varepsilon_0$ is given by \cref{L3.3}.
Let $\hat{x} = x_0 + \frac{1}{2^i} x$ for some $x \in \cQ_{4 \sqrt{n}}$. Then simple calculation shows that
$$
\frac{1}{\tilde{M}^{k-1}} \delta (u , \hat{x}, \frac{y}{2^i}) = \delta(v , x , y)\,,
$$
and using \eqref{EA1}, we obtain
\begin{align*}
\frac{1}{ (2^i)^{\alpha} \tilde{M}^{k-1}} \cM^{-}u(\hat{x}) 
&= \frac{1}{ (2^i)^{\alpha} \tilde{M}^{k-1}} \int_{\Rd} \Bigl[\frac{\lambda 
\delta^{+}(u,\hat{x},\frac{y}{2^i})}{\abs y ^{d}} \left( \frac{(2 - \alpha) (2^i)^{\alpha}  }{\abs y ^{\alpha}} \right )
\\
&\hspace{3em} - \frac{\Lambda \delta^{-}(u,\hat{x},\frac{y}{2^i})}{\abs y ^{d}} 
\left( \frac{(2 - \alpha) (2^i)^{\alpha}}{\abs y ^{\alpha}} + \varphi({2^i}/{ \abs y}) \right )\Bigr] \D{y}
 \\
 &\geq \frac{1}{\tilde M^{k-1}} \int_{\Rd} \frac{\lambda \delta^{+}(u,\hat{x},\frac{y}{2^i})}{\abs y ^{d}} \left( \frac{2 - \alpha }{\abs y ^{\alpha}} \right )
 \\
 &\hspace{3em} - \frac{\Lambda \delta^{-}(u,\hat{x},\frac{y}{2^i})}
 {\abs y ^{d}} \left( \frac{ 2 - \alpha}{\abs y ^{\alpha}} + \frac{\kappa_\circ}{2^{i (\alpha - \beta)}} \varphi(\frac{1}{ \abs y}) \right ) \D{y}
 \\
 &\geq   \cM_{i}^{-} v(x)\,.
\end{align*}
Thus we have the claim $\cM_{i}^{-} v \leq \varepsilon_0$ in $\cQ_{4 \sqrt{d}}$. Therefore, we can apply
\cref{L3.3} to obtain
$$\tilde\mu< \vert \{ v(x)  \leq M \} \cap \cQ_1 \vert = 2^{id} \vert \{ u(x)  \leq M^{k} \} \cap \cQ \vert\,$$
implying
$$\vert \{ u(x)  \leq M^{k} \} \cap \cQ \vert\,>\,\tilde\mu |\cQ|.$$
This gives us \eqref{ET3.2B}. This completes the proof.
\end{proof}

\begin{remark}
Note that the constants $(\tilde\varepsilon, \tilde{M},  \tilde\mu)$ in \cref{T3.2} also
work if we replace $\cM^{-}$ by $\cM_i^-$ for all $i\geq 0$.
\end{remark}

By a standard covering argument we obtain following result
\begin{theorem}\label{T3.3}
Let $ u \geq 0$ in $\Rd$ , $u(0) \leq 1$, and $\cM_i^{-} u \leq \tilde\varepsilon_0$ in $\sB_2$. Then
$$
\vert  \{ u \geq t \} \cap \sB_1   \vert \leq C\, t^{-\varepsilon} \:   \: \text{for  every} \; t>0\,,
$$
where the constant $C$ and $\varepsilon$ depend only on ($d,\lambda,\Lambda,\alpha,\varphi$).
\end{theorem}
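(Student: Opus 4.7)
The plan is to reduce Theorem~\ref{T3.3} to Theorem~\ref{T3.2} through rescaling and a finite chain-covering argument. The main obstacle is that the cube $\cQ_{4\sqrt{d}}$ does not fit inside $\sB_2$, so Theorem~\ref{T3.2} cannot be applied directly to $u$, and the single-point hypothesis $u(0) \leq 1$ must be propagated across the whole ball $\sB_1$.

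First, I would rescale. Pick $l_0 \geq 1$ such that $\rho := 2^{-l_0} \leq (2d)^{-1}$, which ensures $\cQ_{4\sqrt{d}\rho}(x_0) \subset \sB_2$ for every $x_0 \in \sB_1$. Setting $v(y) := u(\rho y)$, the same scaling computation as in the proof of Theorem~\ref{T3.2} (using the weak upper scaling~\eqref{EA1}) yields $\cM_{i+l_0}^{-} v \leq C \rho^\alpha \tilde\varepsilon_0$ in $\cQ_{4\sqrt{d}}$; choosing $\tilde\varepsilon_0$ sufficiently small, this is bounded by the threshold $\tilde\varepsilon$ from Theorem~\ref{T3.2}. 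Since $v(0) = u(0) \leq 1$, Theorem~\ref{T3.2} with $\cM_{i+l_0}^{-}$ in place of $\cM^-$ (allowed by the Remark following it) gives $\vert\{v \geq t\} \cap \cQ_1\vert \leq \kappa t^{-\varepsilon}$, and rescaling back yields $\vert\{u \geq t\} \cap \cQ_\rho(0)\vert \leq C_0 t^{-\varepsilon}$.

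Next, I would chain this estimate across $\sB_1$. Choose a finite sequence $\{x^{(k)}\}_{k=0}^N \subset \sB_1$ with $x^{(0)} = 0$, $|x^{(k)} - x^{(k-1)}| \leq \rho/2$, and $\sB_1 \subset \bigcup_{k=0}^N \cQ_\rho(x^{(k)})$, where $N$ depends only on $d$. Proceed by induction: suppose $\vert\{u \geq t\} \cap \cQ_\rho(x^{(k-1)})\vert \leq C_{k-1} t^{-\varepsilon}$. Picking $M_k$ with $C_{k-1} M_k^{-\varepsilon} < \vert\cQ_\rho\vert$, the set $\{u \leq M_k\} \cap \cQ_\rho(x^{(k-1)})$ is nonempty; combined with $\cQ_\rho(x^{(k-1)}) \subset \cQ_{3\rho}(x^{(k)})$ (since $|x^{(k)} - x^{(k-1)}| \leq \rho/2$), this gives $\inf_{\cQ_{3\rho}(x^{(k)})} u \leq M_k$. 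Now apply the argument of the first step to $u/M_k$ translated to $x^{(k)}$ (valid because $\cQ_{4\sqrt{d}\rho}(x^{(k)}) \subset \sB_2$), which yields $\vert\{u \geq t\} \cap \cQ_\rho(x^{(k)})\vert \leq C_k t^{-\varepsilon}$ with $C_k \leq C' C_{k-1}$ for a universal $C'$. Summing over $k$ then gives $\vert\{u \geq t\} \cap \sB_1\vert \leq (N+1) C_N t^{-\varepsilon}$, which is the desired estimate.

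The main obstacle is controlling the geometric growth $C_k \leq (C')^k C_0$ along the chain. This is handled by the fact that $N$ depends only on $d$ (through $\rho$), so $C_N$ is a constant depending only on the allowed parameters $(d,\lambda,\Lambda,\alpha,\varphi)$.
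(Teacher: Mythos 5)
Your proposal is correct and implements precisely what the paper glosses as a ``standard covering argument'': rescale so that $\cQ_{4\sqrt d\,\rho}(x_0)\subset\sB_2$ for all $x_0\in\sB_1$, apply \cref{T3.2} (valid for $\cM_i^-$ by the remark), and chain the resulting $L^\varepsilon$ estimate through a finite sequence of overlapping cubes covering $\sB_1$, propagating the pointwise bound by extracting a point of smallness from the previous cube. One minor imprecision worth noting: since $\varphi_i$ already carries a factor of $\kappa_\circ$, the rescaling $v(y)=u(\rho y)$ with $\rho=2^{-l_0}$ produces $\rho^\alpha\varphi_i(1/(\rho|y|))\leq\kappa_\circ\varphi_{i+l_0}(1/|y|)$ rather than $\varphi_{i+l_0}(1/|y|)$; the clean way to state the scaling conclusion is therefore $\cM^-_{i+l_0-l_1}v\leq\rho^\alpha\tilde\varepsilon_0$ with $l_1\approx\lceil\log_2\kappa_\circ/(\alpha-\beta)\rceil$, requiring $l_0\geq l_1$, rather than ``$\cM^-_{i+l_0}v\leq C\rho^\alpha\tilde\varepsilon_0$'' (a multiplicative constant in front of $\tilde\varepsilon_0$ does not compensate for a mismatched $\varphi$-weight inside the nonlinear operator). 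Since the remark after \cref{T3.2} gives uniformity over \emph{all} indices $j\geq 0$ and $i+l_0-l_1\geq 0$ whenever $l_0\geq l_1$, this shift is harmless, and the rest of the chaining and the geometric growth control $C_k\leq(C')^kC_0$ with $N=N(d)$ are handled correctly.
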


We conclude the section by proving a weak-Harnack estimate.
\begin{theorem}\label{T3.4}
Let $ u \geq 0$ in $\Rd$ and $\cM^{-} u \leq C_0$ in $B_{2r}$, $0< r \leq 1$. Then
$$
\vert  \{ u \geq t \} \cap \sB_{r}   \vert \leq  C r^d (u(0)+ C_0 r^{\alpha})^{\varepsilon} t^{-\varepsilon} \;   \; \text{for every} \; t>0\,,
$$
for some  constants $C, \varepsilon$ as in \cref{T3.3}. In particular,
$$\norm{u}_{L^{\nicefrac{\varepsilon}{2}}(\sB_r)}\leq C (u(0)+ C_0 r^{\alpha}).$$
\end{theorem}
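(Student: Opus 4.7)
The plan is to reduce the claim to the normalized tail estimate of \cref{T3.3} via a scaling argument, and then extract the $L^{\nicefrac{\varepsilon}{2}}$ bound by layer-cake integration. Set
$$M \df (u(0) + C_0 r^\alpha)/\tilde\varepsilon_0,$$
and define $v(x) \df u(rx)/M$ on $\Rd$. Then $v \geq 0$ and $v(0) = u(0)/M \leq \tilde\varepsilon_0 \leq 1$. What remains is to verify that $\cM_0^- v \leq \tilde\varepsilon_0$ in $\sB_2$, after which \cref{T3.3} applied with $i = 0$ yields $|\{v \geq t\} \cap \sB_1| \leq C t^{-\varepsilon}$, and undoing the scaling finishes the distributional part of the claim.

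The scaling identity is obtained by changing variables $z = ry$ in the defining integral of $\cM^- u(rx)$. Using $\delta(u, rx, ry) = M\,\delta(v, x, y)$, one finds
$$\frac{r^\alpha}{M}\cM^- u(rx) \,=\, \int_\Rd \frac{\lambda \delta^+(v,x,y)}{|y|^d}\frac{2-\alpha}{|y|^\alpha} - \frac{\Lambda \delta^-(v,x,y)}{|y|^d}\left(\frac{2-\alpha}{|y|^\alpha} + r^\alpha\, \varphi(1/(r|y|))\right)\D y.$$
The key observation is that the weak upper scaling assumption \eqref{EA1}, applied with $s = 1/r \geq 1$ and $t = 1/(r|y|)$, gives
$$r^\alpha\,\varphi(1/(r|y|)) \,\leq\, \kappa_\circ\, r^{\alpha-\beta}\,\varphi(1/|y|) \,\leq\, \kappa_\circ\,\varphi(1/|y|) \,=\, \varphi_0(1/|y|),$$
using $r \leq 1$ and $\alpha > \beta$. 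Enlarging the coefficient on the $\delta^-$ term then shows $\frac{r^\alpha}{M}\cM^- u(rx) \geq \cM_0^- v(x)$, so the hypothesis $\cM^- u \leq C_0$ in $\sB_{2r}$ gives $\cM_0^- v(x) \leq r^\alpha C_0/M = \tilde\varepsilon_0\, C_0 r^\alpha/(u(0)+C_0 r^\alpha) \leq \tilde\varepsilon_0$ in $\sB_2$, as required.

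Applying \cref{T3.3} to $v$ and changing variables back via $y = rx$ yields, for every $\tau > 0$,
$$|\{u \geq \tau\} \cap \sB_r| \,=\, r^d\,|\{v \geq \tau/M\} \cap \sB_1| \,\leq\, C r^d M^\varepsilon \tau^{-\varepsilon} \,\leq\, C' r^d (u(0) + C_0 r^\alpha)^\varepsilon \tau^{-\varepsilon},$$
which is the first assertion. The $L^{\nicefrac{\varepsilon}{2}}$ bound then follows by the layer-cake formula: split $\int_{\sB_r} u^{\nicefrac{\varepsilon}{2}} = \frac{\varepsilon}{2}\int_0^\infty \tau^{\nicefrac{\varepsilon}{2}-1}|\{u > \tau\}\cap \sB_r|\D\tau$ at the threshold $\tau_0 \df u(0) + C_0 r^\alpha$, bound the low range by the trivial estimate $|\{u > \tau\} \cap \sB_r| \leq \omega_d r^d$ and the high range by the distributional estimate; both pieces contribute at most $C\, r^d \tau_0^{\nicefrac{\varepsilon}{2}}$. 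Raising to the power $2/\varepsilon$ and using $r^{2d/\varepsilon} \leq 1$ absorbs the prefactor into a universal constant. The main subtlety is the scaling step: because $r$ need not be dyadic we cannot directly transform $\cM^-$ into $\cM_i^-$ as in the proof of \cref{T3.2}, and instead we must absorb the rescaled $\varphi$-tail into $\varphi_0$ using \eqref{EA1}; this is precisely where the strict inequality $\alpha > \beta$ together with $r \leq 1$ enters.
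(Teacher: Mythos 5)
Your proof is correct and, in the scaling step, genuinely different from the paper's. The paper chooses a dyadic level $k$ with $2^{-k}<r\leq\frac{3}{2}\,2^{-k}$, sets $v(x)=u(2^{-k}x)$, invokes the computation from \cref{T3.2} to get $\cM_k^{-}v\leq r^\alpha C_0$ in $\sB_2$, and then has to apply a version of \cref{T3.3} tuned to give a tail estimate on the slightly larger ball $\sB_{3/2}$ (since $\sB_r$ only lands inside $\sB_{3/2}$ after dyadic rescaling). You instead rescale directly by $r$ and verify the scaling identity from scratch, absorbing the rescaled tail into $\varphi_0$ via \eqref{EA1} applied with $s=1/r\geq 1$: $r^\alpha\varphi(1/(r|y|))\leq\kappa_\circ r^{\alpha-\beta}\varphi(1/|y|)\leq\varphi_0(1/|y|)$. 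This lands exactly on $\sB_1$ and applies \cref{T3.3} with $i=0$ as stated, avoiding the modification the paper alludes to. The trade-off is that the paper can reuse its already-verified dyadic scaling identity from \cref{T3.2}, whereas you have to redo the change of variables, but the computation is short and your route is arguably cleaner since no auxiliary ball adjustment is needed. Your verification that $\cM_0^{-}v\leq\tilde\varepsilon_0$ and $v(0)\leq 1$ is correct (using $\tilde\varepsilon_0\leq 1$), and the layer-cake argument with the cutoff at $\tau_0=u(0)+C_0r^\alpha$ is a more carefully spelled-out version of what the paper compresses into ``integrating both sides,'' including the step $r^{2d/\varepsilon}\leq 1$ needed to remove the $r$-dependence from the $L^{\varepsilon/2}$ bound.
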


\begin{proof}
Choose $k\in\NN\cup\{0\}$ satisfying $2^{-k}< r\leq \frac{3}{2} 2^{-k}$.
Let $v(x)=u(\frac{1}{2^k}x)$. Then from the calculation of \cref{T3.2} it follows that
$$\cM^{-}_k v(x) \leq \frac{1}{2^{k\alpha}}\cM^{-} u(\frac{1}{2^k}x)\leq r^\alpha C_0\quad \text{in}\; \sB_2.$$
Multiplying $v$ with $\frac{\tilde\varepsilon_0}{v(0)+ r^\alpha C_0}$, it follows from \cref{T3.3} (modifying
the argument a bit)
\begin{equation}\label{ET3.4A}
|\{v\geq t\}\cap\sB_{\frac{3}{2}}|\leq C\, t^{-\varepsilon}\quad t>0\,.
\end{equation}
Hence, by our choice of $k$, we get
$$\vert  \{ u \geq t \} \cap \sB_{r}   \vert \leq  C r^d (u(0)+ C_0 r^{\alpha})^{\varepsilon} t^{-\varepsilon}.
$$ 
The second conclusion follows by integrating both sides of \eqref{ET3.4A} with respect to $t$.
\end{proof}

\section{H\"{o}lder regularity}\label{S-Holder}
Using the results developed in \cref{S-ABP}, in this section we establish an interior H\"{o}lder regularity.
The main theorem of this section is the following.
\begin{theorem}\label{T4.1}
Let $u$ be a bounded continuous function defined on $\Rd$ and satisfy
$$\cM^+ u \;\geq -C_0, \quad \cM^- u\leq\; C_0\quad \text{in}\; \sB_1,$$
for some constant $C_0$. Then $u\in\cC^\gamma(\sB_{\frac{1}{2}})$ and
$$\norm{u}_{\cC^\gamma(\sB_{\frac{1}{2}})}\;\leq\; C (\norm{u}_{L^\infty(\Rd)} + C_0),$$
where $\gamma, C$ depend only on $d, \lambda, \Lambda, \alpha, \varphi$.
\end{theorem}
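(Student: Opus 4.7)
The plan is to prove the classical oscillation-decay estimate: I will construct monotone sequences $m_k \nearrow$ and $M_k \searrow$ with $M_k - m_k = 2^{-\gamma k}$ for some $\gamma\in(0,\alpha)$ to be fixed, such that $m_k \le u \le M_k$ on $\sB_{2^{-k}}$. Iterating this yields $\osc_{\sB_{2^{-k}}} u \le 2^{-\gamma k}$, which is equivalent to the $\cC^\gamma$ estimate stated. After a standard normalization (subtracting a constant and dividing by $\norm{u}_{L^\infty(\Rd)}+C_0/\varepsilon_\star$ for a small $\varepsilon_\star$ to be chosen), I may assume $|u|\le \tfrac12$ on $\Rd$ and that the right-hand side $C_0$ is as small as I wish; the base case $k=0$ is then trivial with $M_0 = \tfrac12$, $m_0 = -\tfrac12$. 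For $k<0$ I set $M_k = \tfrac12 \cdot 2^{-\gamma k}$ and $m_k=-M_k$ just to give a consistent tail statement.

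In the inductive step, assume the sequences have been built up through index $k$. Rescale by setting
\[
v(x) \df 2^{k\gamma+1}\bigl(u(2^{-k}x) - \tfrac12(M_k+m_k)\bigr)\,,
\]
so that $|v| \le 1$ on $\sB_1$. The inductive hypothesis on earlier $j\le k$ translates into the tail control $|v(x)| \le 2\,(2|x|)^{\gamma}$ for $|x|\ge 1$, so $v$ has at-most polynomial growth of order $\gamma$. Because $u$ satisfies the Pucci inequalities on $\sB_1$ and $\gamma<\alpha$, the scaling calculation already used in the proof of \cref{T3.2} shows that the rescaled function satisfies $\cM^+_k v \ge -2^{k(\gamma-\alpha)}C_0 \ge -C_0$ and $\cM^-_k v \le C_0$ on $\sB_{2^k}\supset\sB_2$. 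Now perform the dichotomy on $\sB_{1/2}$: either $|\{v\le 0\}\cap \sB_{1/2}|\ge \tfrac12|\sB_{1/2}|$ or $|\{v\ge 0\}\cap \sB_{1/2}|\ge\tfrac12|\sB_{1/2}|$. Assume the former (the other is symmetric) and work with $w\df 1-v$, which is $\ge 0$ on $\sB_1$.

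To apply the weak Harnack inequality (\cref{T3.4} combined with the \cref{T3.2}-remark that its constants hold for $\cM^-_k$ uniformly in $k$), I need a nonnegative function on all of $\Rd$. Define the truncation $\tilde w \df w^+$. Since $\tilde w \ge w$ everywhere and $\tilde w = w$ on $\sB_1$, the identity $\delta(\tilde w,x,y) = \delta(w,x,y) + (\tilde w - w)(x+y) + (\tilde w-w)(x-y)$ for $x\in\sB_{1/2}$ gives, for every admissible kernel $L\in\sL$,
\[
L\tilde w(x) \;\le\; Lw(x) + 2\int_{\Rd\setminus\sB_1}(v(y/2^k\cdot\ldots)-1)^+\,\frac{k(y-x)}{|y-x|^d}\,\D y\,,
\]
and the tail integral is bounded uniformly in $k$ by using the growth $v(y)\lesssim|y|^\gamma$ together with the upper kernel bound $\Lambda(|y|^{-d-\alpha}+\varphi(1/|y|)|y|^{-d})$ and the integrability implied by \eqref{EA1}-\eqref{EA2}, provided $\gamma<\alpha$. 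Taking the infimum over $L$ yields $\cM^-_k \tilde w \le C_0 + C_{\mathrm{tail}}$ on $\sB_{1/2}$. Applying \cref{T3.4} to $\tilde w$ on $\sB_{1/4}$ and using that $\tilde w \ge 1$ on the set $\{v\le 0\}\cap \sB_{1/2}$ of measure $\gtrsim 1$, one obtains the pointwise lower bound $\tilde w(0) \ge 2c$ for a universal $c>0$, hence $v(0)\le 1-2c$. Repeating this at every point of $\sB_{1/4}$ via translation (or using the $L^{\varepsilon/2}$ bound and a covering argument) gives $v \le 1-c$ on $\sB_{1/4}$, i.e.\ $u \le M_k - c\cdot 2^{-k\gamma-1}$ on $\sB_{2^{-k-2}}$.

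Finally, pick $\gamma$ so small that $4^{-\gamma}\ge 1-c/2$; this exactly allows me to set $M_{k+2} \df M_k - c\cdot 2^{-k\gamma-1}$ and $m_{k+2}\df m_k$ (and adjust $m_{k+1},M_{k+1}$ consistently) so that $M_{k+2}-m_{k+2}=2^{-(k+2)\gamma}$, closing the induction. The main obstacle is the control of the rescaled tail: the operator is \emph{not} scale-invariant (the $\varphi$-part does not scale like $|y|^{-d-\alpha}$), so I must verify that the extra contribution produced by both the truncation and the rescaling is uniformly bounded across all scales $k$. This is exactly where the assumption $\gamma<\alpha$, the weak scaling \eqref{EA1}, and the uniform control of $\cM^-_k$ established in \cref{T3.2} and the ensuing remark come together.
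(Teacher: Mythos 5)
Your overall strategy — oscillation decay via measure dichotomy, rescaling to the scaled extremal operators $\cM_k^{\pm}$, truncating the non-negative part, and invoking the weak Harnack inequality (\cref{T3.4}), closing the induction by choosing $\gamma$ small — is exactly the route the paper takes through \cref{L4.1}. However, there is a genuine gap in the way you handle the tail contribution from the $\varphi$-part of the kernel, and your stated condition ``$\gamma<\alpha$'' is not the right one.

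Specifically, you bound the tail using the growth estimate $|v(y)|\lesssim |y|^\gamma$ together with ``the integrability implied by \eqref{EA1}--\eqref{EA2}.'' For the stable part of the kernel this works: $\int_{|y|\ge 1/4}|y|^{\gamma}\frac{2-\alpha}{|y|^{d+\alpha}}\,\D y<\infty$ iff $\gamma<\alpha$, and the coefficient does not depend on $k$. But for the $\varphi$-part the same argument does not close: $\int_{|y|\ge 1/4}|y|^\gamma\frac{\varphi(1/|y|)}{|y|^d}\,\D y\sim\int_0^1 s^{-\gamma-1}\varphi(s)\,\D s$, and \eqref{EA2} only guarantees $\int_0^1 s^{-1}\varphi(s)\,\D s<\infty$, which is strictly weaker; for example $\varphi(s)=s^{\gamma}$ near $0$ satisfies \eqref{EA2} but makes your tail integral diverge. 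The way out — used in the paper's proof of \cref{L4.1} — is to exploit that the inductive bound \emph{saturates}: $v^-(y)\lesssim 2^{k\gamma}-1$ uniformly in $y$, since outside the original ball $\sB_1$ one only knows $|u|\le\tfrac12$. Then the $\varphi$-tail is bounded by a finite constant times the \emph{$k$-dependent} prefactor $(2^{k\gamma}-1)\cdot 2^{-k(\alpha-\beta)}$, where the $2^{-k(\alpha-\beta)}$ comes from the scaling in $\cM_k^{\pm}$ (i.e., $\varphi_k=\kappa_\circ 2^{-k(\alpha-\beta)}\varphi$). This prefactor is bounded uniformly in $k$ only if $\gamma<\alpha-\beta$, and is made smaller than $\tilde\varepsilon$ by taking $\gamma$ small; the paper makes this precise through the function $h(t)=\log[(8^{\gamma t}-1)8^{(\beta-\alpha)t}]$. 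So the constraint is not ``$\gamma<\alpha$'' but ``$\gamma<\alpha-\beta$ (and small),'' and the stable and $\varphi$ parts of the tail must be estimated by genuinely different mechanisms. As it stands, your tail estimate fails whenever $\alpha-\beta<\gamma<\alpha$, and the claim that the extra contribution is ``uniformly bounded across all scales $k$ ... provided $\gamma<\alpha$'' is false.

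A minor additional point: the half you denote $w=1-v$ (or $w=v$ in the symmetric case) is not the right scaling to apply \cref{T3.4} directly; one needs the truncation and the tail estimate above to conclude that $\cM_k^-w^+\le C\tilde\varepsilon$ on a smaller ball, after which the weak Harnack step and the choice of $\theta$ (equivalently your $c$) such that $8^{-\gamma}=1-\theta/2$ closes the induction exactly as in the paper. Once the $\gamma<\alpha-\beta$ correction is made, the rest of your outline matches the paper's argument.
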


We follow the approach of \cite{CS09} to prove \cref{T4.1}.
\cref{T4.1} would follow from the following result.
\begin{lemma}\label{L4.1}
Let $u$ be a continuous function satisfying
$$
-\frac{1}{2} \leq u \leq \frac{1}{2} \quad in \: \Rd, \quad 
\cM^{+} u \geq -\tilde\varepsilon, \quad \cM^{-}u \leq \tilde\varepsilon\quad \text{in}\; \sB_1\,.
$$
Then there is a $\gamma >0$ (depending on $\Lambda, \lambda, \alpha, \varphi$)
such that $u \in \cC^{\gamma}$ at the origin. In particular,
$$
\vert u(x) - u(0) \vert \leq  C \abs x^{\gamma}
$$
for some constant $C$.
\end{lemma}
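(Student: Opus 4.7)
I would prove \cref{L4.1} by the classical iterative oscillation-decay argument of Caffarelli--Silvestre \cite{CS09}, with the twist that the scaled operators $\cM_k^{\pm}$, together with the constraint $\gamma < \alpha - \beta$, are essential to keep all constants uniform in the iteration step. Fix such a $\gamma$ and a ratio $r_0 \in (0, \tfrac12]$, both to be chosen later. The plan is to construct monotone sequences $m_k \nearrow$ and $M_k \searrow$ starting from $M_0 = \tfrac12, m_0 = -\tfrac12$, satisfying
\[
M_k - m_k = 2 r_0^{\gamma k},\qquad m_k \leq u \leq M_k \quad \text{on }\sB_{r_0^k},
\]
from which the H\"older bound at the origin is immediate.

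For the inductive step from $k$ to $k+1$, introduce the rescaling $v(y) \df r_0^{-\gamma k}\bigl(u(r_0^k y) - \tfrac12(M_k + m_k)\bigr)$, so that $|v| \leq 1$ on $\sB_1$. The induction hypothesis together with the monotonicity of $\{M_j\}, \{m_j\}$ yields the growth control $|v(y)| \leq C|y|^\gamma$ for $1 \leq |y| \leq r_0^{-k}$, while the global bound $|u|\leq\tfrac12$ gives $|v(y)| \leq C r_0^{-\gamma k}$ for $|y| \geq r_0^{-k}$. The rescaling identity used in the proof of \cref{T3.2} shows that $v$ satisfies $\cM_k^+ v \geq -\tilde\varepsilon$ and $\cM_k^- v \leq \tilde\varepsilon$ in $\sB_1$, bounded uniformly in $k$ since $\gamma < \alpha$.

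By dichotomy, WLOG $|\{v \leq 0\} \cap \sB_{r_0}| \geq \tfrac12 |\sB_{r_0}|$ (else replace $v$ by $-v$). Set $\tilde v \df \max(\min(v, 1), -1)$ and $w \df 1 - \tilde v$; then $w \geq 0$ globally, $w \leq 2$ globally, and $|\{w \geq 1\} \cap \sB_{r_0}| \geq \tfrac12 |\sB_{r_0}|$. Writing $\tilde v = v + \psi$ with $\psi$ supported in $\sB_1^c$ and $|\psi|\leq|v|$, the identity $\cM_k^- w = -\cM_k^+\tilde v$ combined with the sub/super-additivity $\cM_k^+\tilde v \geq \cM_k^+ v + \cM_k^- \psi$ reduces matters to bounding the tail integral
\[
T_k(x) \df \int_{|y|\geq 1/2}\frac{|v(x\pm y)|}{|y|^d}\Bigl(\frac{2-\alpha}{|y|^\alpha}+\varphi_k(1/|y|)\Bigr)\,\D{y},\qquad x\in\sB_{1/2}.
\]
The stable part of $T_k$ is uniformly bounded provided $\gamma<\alpha$. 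For the $\varphi_k$ part, splitting the integration at $|y|=r_0^{-k}$, using $|v|\leq C|y|^\gamma$ on the inner piece and $|v|\leq C r_0^{-\gamma k}$ on the outer piece, and exploiting the built-in decay $\kappa_\circ 2^{-(\alpha-\beta)k}$ of $\varphi_k$ together with \eqref{EA1} and \eqref{EA2}, one obtains $T_k(x)\leq C_\sharp$ uniformly in $k$ precisely when $\gamma<\alpha-\beta$. Hence $\cM_k^- w \leq C$ on $\sB_{1/2}$ with $C$ independent of $k$.

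The final step invokes the weak-Harnack $L^{\varepsilon/2}$ estimate \cref{T3.4} (valid for $\cM_k^-$ by the remark following \cref{T3.2}) in a ball around each $\bar x \in \sB_{r_0}$: the positive-density condition $|\{w\geq 1\}\cap\sB_{r_0}|\geq\tfrac12|\sB_{r_0}|$ provides a lower bound on the $L^{\varepsilon/2}$-norm of $w$ which, combined with $\|w\|_{L^{\varepsilon/2}(\sB_r(\bar x))}\leq C(w(\bar x)+C_\sharp r^\alpha)$, forces $w(\bar x)\geq 2\theta$ for some universal $\theta>0$ (after choosing $r_0$ small enough). Thus $v\leq 1-2\theta$ on $\sB_{r_0}$, so setting $M_{k+1}\df M_k-2\theta r_0^{\gamma k}$, $m_{k+1}\df m_k$ closes the induction provided $\gamma$ is taken so that $r_0^\gamma\geq 1-\theta$. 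The main obstacle is the tail estimate for $\varphi_k$: since \eqref{EA2} gives only $\int_0^1\varphi(t)/t\,\D{t}<\infty$ and not $\int_0^1\varphi(t)/t^{1+\gamma}\,\D{t}<\infty$, a naive computation produces a factor $r_0^{-\gamma k}$ that must be absorbed by the built-in decay of $\varphi_k$, which is precisely why the scaled operators $\cM_k^\pm$ are indispensable for the iteration to close.
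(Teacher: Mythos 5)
Your proposal follows the same Caffarelli--Silvestre oscillation-decay iteration that the paper uses, and you correctly identify every key structural point: rescaling with a power $r_0^{\gamma k}$ (the paper takes $r_0 = 1/8$), the need to work with the scaled extremal operators $\cM_{3k}^\pm$ so that the equation constants stay uniform, the dichotomy and truncation to produce a nonnegative $w$ with large level set, the tail estimate whose uniform boundedness in $k$ forces $\gamma < \alpha - \beta$, and the application of \cref{T3.4} to close the induction. The only differences from the paper's writeup are cosmetic (you center $v$ at the midpoint and truncate symmetrically via $\tilde v$, whereas the paper shifts by $m_k$ and works with $v^+$), so this is essentially the paper's proof.
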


\begin{proof}
Following \cite{CS09}
We show that there exists sequences $m_k$ and $M_k$ satisfying $m_k \leq  u \leq M_k$ in 
$\sB_{8^{-k}}$ and
$$
M_k - m_k = 8^{- \gamma k}.
$$
Then the results follows by choosing $C = 8^{\gamma}$.

For $k =0$ we choose $m_0 = -\frac{1}{ 2}$ and $ M_0 = \frac{1}{ 2}$. By assumption we have $m_0 \leq u \leq M_0$ in the whole space $\Rd$. We proceed to construct the sequences $M_k$ and $m_k$ by induction.
So by induction hypothesis we assume the construction of $m_j, M_j$ for $j=0,\ldots, k$.
We want to show that we can continue the sequences by finding $m_{k+1}$ and $M_{k+1}$.

Consider the ball  $\sB_{\frac{1}{8^{k+1}}}$. Then one of the following holds 
\begin{equation}\label{EL4.1A}
\vert \{ u \geq \frac{M_k + m_k}{2} \} \cap \sB_{\frac{1}{8^{k+1}}} \vert 
\;\geq\; \frac{1} {2}\vert \sB_{\frac{1}{8^{k+1}}}\vert\,;
\end{equation}
\begin{equation}\label{EL4.1B}
\vert \{ u \leq \frac{M_k + m_k}{2} \} \cap \sB_{\frac{1}{8^{k+1}}} \vert 
\;\geq\; \frac{1} {2}\vert \sB_{\frac{1}{8^{k+1}}}\vert\,.
\end{equation}
Suppose that \eqref{EL4.1A} holds. Define
$$
v(x) \df \frac{u(8^{-k} x ) - m_k}{\nicefrac{(M_k - m_k)}{2}}\,.
$$
Then that $v(x) \geq  0$ in $\sB_1$ and $\vert \{v \geq 1 \} \cap \sB_{\nicefrac{1}{8}} \vert \geq 		\nicefrac{ \vert B_{\nicefrac{1}{8}} \vert}{2}$. Moreover, since $\cM^{-}u \leq \tilde\varepsilon$ in $\sB_1$,
we get from the calculation in \cref{T3.2}
$$
\cM_{3k}^{-}v \leq \frac{8^{-k \alpha} \tilde\varepsilon}{\nicefrac{(M_k - m_k)}{2}} 
= 2 \tilde\varepsilon 8^{-k(\alpha - \gamma)} \leq 2 \tilde\varepsilon \quad \text{in} \; \sB_{8^k}\,,
$$
provided we set $\gamma\leq \alpha$. From the induction hypothesis, for any $j \geq 1$, we have
\begin{align*}
v \geq \frac{(m_{k-j} - m_k)}{\nicefrac{(M_k - m_k)}{2}} \geq \frac{(m_{k-j} -M_{k-j} + M_k - m_k)}{\nicefrac{(M_k - m_k)}{2}} 
&\geq -2 \cdot 8^{\gamma j} +2 \geq 2(1-8^{\gamma j})\quad  \text{ in} \; \sB_{8^j}\,.
\end{align*}
Thus $v(x) \geq \max\{-2(\vert 8x \vert^{\gamma} - 1),-2( 8^{(k+1)\gamma} - 1)\}\df-g(x)$ outside $\sB_1$. Letting $w(x) = \max(v,0)$
we also see that 
\begin{equation}\label{EL4.1C}
\cM_{3k}^{-} w \leq \cM_{3k}^{-} v + \cM_{3k}^{+} v^{-}.
\end{equation}

We claim that $\cM_{3k}^{+} v^{-} \leq 2 \tilde\varepsilon$ in $\sB_{\nicefrac{3}{4}}$, for all $k$, 
if we choose $\gamma$ small enough. For $x \in \sB_{\nicefrac{3}{4}}$, since $v^{-}(x) = 0$, we have 
$\delta(v^{-}, x, y) = \delta^{+}(v^{-}, x, y) = v^{-}(x+y)+ v^{-}(x-y)$ for all $y \in \Rd$, and 
by \eqref{E2.3},
$$
\cM_{3k}^{+} v^{-}(x) = \int_{\Rd} \frac{\Lambda \delta^{+}(v^{-}, x, y)}{\abs y^d}
 \left( \frac{2 - \alpha}{\abs y ^{\alpha}} + 8^{(\beta-\alpha)k}\varphi_0({1}/{ \abs y}) \right) \D{y}\,.
$$
If $\abs{y} < \frac{1}{4}$, then both $x+y$ and $x-y$ is in $\sB_1$ so $v^{-}(x+y) = v^{-}(x-y) = 0$.
This gives us
\begin{align*}
\cM_0^{+} v^{-}(x) &= \Lambda\int_{\{ \abs y \geq \frac{1}{4}\}} \frac{ v^{-}(x+y)+ v^{-}(x-y)}{\abs y^d} \left( \frac{ 2 - \alpha}{\abs y ^{\alpha}} + 8^{(\beta-\alpha)k}\varphi_0({1}/{ \abs y}) \right) \D{y}
 \\
 &\leq  \Lambda \int_{\{ \abs y \geq \frac{1}{4}\}} 
 \frac{  g^+(x+y) + g^+(x-y)}{\abs{y}^d} \left( \frac{ 2 - \alpha}{\abs y ^{\alpha}} + 8^{(\beta-\alpha)k}\kappa_\circ\varphi({1}/{\abs y}) \right) \D{y}
 \\
 &\leq 2 \Lambda \int_{\{ \abs y \geq \frac{1}{4}\}} 
 \frac{ g^+(x+y)}{\abs y^d} 
 \left( \frac{ 2 - \alpha}{\abs y ^{\alpha}} + 8^{(\beta-\alpha)k}\kappa_\circ\varphi(\frac{1}{ \abs y}) \right) \D{y}
  \\
 &\leq 4 \Lambda \int_{\{ \abs y \geq \frac{1}{4}\}} 
 \frac{ ( 32^{\gamma} \abs y^{\gamma} - 1)^+}{\abs y^d}  \frac{ 2 - \alpha}{\abs y ^{\alpha}}\, \D{y}
 + 4 \Lambda \int_{\{ \abs y \geq \frac{1}{4}\}} (8^{\gamma(k+1)}-1) 8^{(\beta-\alpha)k}\kappa_\circ
 \frac{1}{|y|^d}\varphi(\frac{1}{ \abs y})\D{y}\,
\\
&= I_1 + I_2.
\end{align*}
Notice that the function $f_{\gamma} = (32^{\gamma} \abs y^{\gamma} - 1)^+ \Ind_{\{ \abs y \geq \frac{1}{4}\}}$ decreases to $0$ as $\gamma \rightarrow 0$. Also, the function becomes integrable if we choose
$\gamma<\alpha$. Thus for a small $\gamma$ we have $I_1\leq \tilde\varepsilon$. So we calculate $I_2$.
We fix $\gamma<\alpha-\beta$. Define the function $h(t)=\log[(8^{\gamma t}-1) 8^{(\beta-\alpha)t}]$ for
$t>0$.  Note that
$$h^\prime(t) =\log 8 [-(\alpha-\beta) + \gamma \frac{8^{\gamma t}}{8^{\gamma t}-1}]<0,$$
for large $t$. So $h(t)$ attends its maximum and 
$$h^\prime(t)=0\; \Rightarrow 8^{\gamma t}-1=\gamma \frac{8^{\gamma t}}{\alpha-\beta}.$$
Thus, 
$$\max_{t\geq 1} e^{h(t)}\leq\; \gamma \sup_{t\geq 1} \frac{8^{(\gamma-\alpha+\beta) t}}{\alpha-\beta}
\to 0,$$
as $\gamma\to 0$. Thus using \eqref{EA2} we can choose $\gamma$ small enough to satisfy
$I_2<\tilde\varepsilon$, uniformly in $k$. This gives us the claim.

Using \eqref{EL4.1C} we obtain $M_{3k}^{-} w \leq 4 \tilde\varepsilon$ in $\sB_{\frac{3}{4}}$,
 provided $\gamma$ is small enough.
We also have 
$$ \vert \{w \geq 1\} \cap \sB_{\frac{1}{8}}\vert \geq \frac{|B_{\frac{1}{8}}|}{2}.$$
Given any point $x \in \sB_{\frac{1}{4}}$, we can apply \cref{T3.4} in
$B_{\frac{2}{4}}(x)$ to obtain
$$
C ( w(x) + 4\tilde\varepsilon)^{\varepsilon} \geq \vert \{ w > 1 \} \cap \sB_{\frac{1}{4}}(x)  \vert \geq \frac{1}{2} \vert \sB_{\frac{1}{8}}\vert\,.
$$
If we have chosen $\tilde\varepsilon$ small, this implies that $ w > \theta $ in $\sB_{\frac{1}{8}}$ for some
$ \theta > 0$. Thus if we let $M_{k+1} = M_k$ and $m_{k+1} = m_k + \theta \frac{( M_k - m_k)}{2}$, we have
$m_{k+1} \leq  u \leq  M_{k+1}$ in $B_{8^{-k-1}}$ . Moreover, 
$M_{k+1} - m_{k+1} =(1-\nicefrac{\theta}{2})8^{-\gamma k}$.
 So we must choose $\gamma$ and $\theta$ small and so that $(1-\nicefrac{\theta}{2}) = 8^{-\gamma}$ and we obtain $M_{k+1} - m_{k+1} =8^{-\gamma (k+1)} $.
\\
On the other hand, if \eqref{EL4.1B} holds, we define
$$
v(x) = \frac{M_k - u(8^{-k} x ) }{\nicefrac{(M_k - m_k)}{2}}
$$
\\
and continue in the same way using that $\cM^{+}u \geq - \tilde\varepsilon$.

\end{proof}

\section{Harnack inequality}\label{S-Har}
In this section and the next section we discuss Harnack's inequality and boundary Harnack inequality.
This will be done for a smaller class of operators. Let $\tilde\sL\subset\sL$ be the set of operators containing
kernel function $k$ satisfying
\begin{equation}\label{EA4}
\lambda \left( \frac{2- \alpha}{{\abs y}^{\alpha}} + 
\varphi({1}/{\abs y}) \right)  
\;\leq\; k(y) \leq  \Lambda \left( \frac{2- \alpha}{{\abs y}^{\alpha}} + 
\varphi({1}/{\abs y}) \right) \quad \text{for some}\; \; \alpha\in (\beta, 2),\tag{A4}
\end{equation}
and $\varphi$ is non-decreasing.
The associated extremal operators are denoted by $\tilde\cM^{\pm}$. In particular,
\begin{align*}
\tilde\cM^{+}u(x) &= \int_{\Rd} \frac{\Lambda \delta^{+}(u,x,y)}{\abs{y}^{d}}
 \left( \frac{ 2 - \alpha}{\abs{y}^{\alpha}} + \varphi({1}/{ \abs y}) \right ) - \frac{\lambda \delta^{-}(u,x,y)}{\abs{y}^{d}} \left( \frac{ 2 - \alpha}{\abs{y}^{\alpha}}  + \varphi({1}/{ \abs y}) \right) \D{y}\,,
\\
\tilde\cM^{-}u(x) &= \int_{\Rd} \frac{\lambda \delta^{+}(u,x,y)}{\abs{y}^{d}} \left( \frac{ 2 - \alpha}{\abs y ^{\alpha}} + \varphi({1}/{ \abs y})  \right ) - \frac{\Lambda \delta^{-}(u,x,y)}{\abs{y}^{d}} \left( \frac{ 2 - \alpha}{\abs{y}^{\alpha}} + \varphi({1}/{ \abs y}) \right ) \D{y}\,.
\end{align*}
It is also evident that $\cM^{-} u \leq \tilde \cM^{-} u\leq \tilde\cM^+ u\leq \cM^+$.
It should be observed that we do not require weak lower scaling property
 on $\varphi$ (compare with \cite{KL20}).
 For instance, 
$\varphi(r)=\log(1+ r^\beta)$ does not satisfy weak lower scaling i.e. 
there is no $\mu>0$ so that $\varphi(s r)\gtrsim s^\mu \varphi(r)$
for $s\geq 1, r>0$. But it does satisfy a weak upper scaling property since for every $s\geq 1$,
$$1+s^\beta r^\beta \leq (1+ r^\beta)^{s^\beta}\;\Rightarrow\; \varphi(s r)\leq s^\beta \varphi(r)\,.$$

Our main result of this section is the following
\begin{theorem}\label{T5.1}
Let $u$ be a  non-negative function satisfying
$$\tilde\cM^{+}u \geq -C_0,\quad \text{and}\quad \tilde\cM^{-}u \leq C_0\quad \text{in}\; \sB_2.$$
Then $u(x) \leq  C ( u(0) + C_0)$ for every $x \in \sB_{\frac{1}{2}}$, for some constant
$C$ dependent only on $\lambda, \Lambda, \alpha, \varphi$.
\end{theorem}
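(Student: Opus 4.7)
The plan is to combine the weak-Harnack $L^{\varepsilon/2}$-estimate of \cref{T3.4} with a barrier/touching-point argument in the spirit of \cite[Theorem~11.1]{CS09}. After dividing $u$ by $u(0)+C_0$ we may assume $u(0)+C_0\le 1$, and it suffices to bound $\sup_{\sB_{1/2}} u$ by a universal constant. Since $\cM^- u\le \tilde\cM^- u\le C_0$ in $\sB_2$, \cref{T3.4} immediately yields $\norm{u}_{L^{\varepsilon/2}(\sB_1)}\le C$. Fix once and for all an exponent $p>2d/\varepsilon$, put $\phi_t(x):=t(3/4-|x|)^{-p}$ on $\sB_{3/4}$, and let $t_*:=\inf\{t>0:\,u\le\phi_t\ \text{on}\ \sB_{3/4}\}$. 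If $t_*$ is bounded by a universal constant $K$, we are done because $\sup_{\sB_{1/2}}u\le K\cdot 4^{p}$; so assume $t_*$ is very large. By continuity there is a touching point $x_0\in\sB_{3/4}$ with $d_0:=3/4-|x_0|>0$ and $u(x_0)=t_*d_0^{-p}$, and the radial monotonicity of $\phi_{t_*}$ gives $u\le 2^p u(x_0)$ throughout $\sB_{d_0/2}(x_0)$.

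Now rescale: set $\tilde u(y):=u(x_0+(d_0/2)y)/u(x_0)$, so $\tilde u\ge 0$ globally, $\tilde u(0)=1$, and $\tilde u\le 2^{p}$ on $\sB_1$. A computation paralleling the one in \cref{T3.2}, together with \eqref{EA1}, shows that $\tilde\cM^{\pm}\tilde u$ is bounded on $\sB_1$ by $(d_0/2)^\alpha C_0/u(x_0)$, which is small since $u(x_0)$ is large. Introduce the globally nonnegative truncation $v:=(2^{p}-\tilde u)_+$. On $\sB_{1/2}$ we bound $\tilde\cM^-v$ by combining $-\tilde\cM^+\tilde u$, coming from the region where the truncation is inactive, with a tail error accounting for the region where $\tilde u>2^{p}$; the tail integral $\int_{|z|>1/4}\tilde u(y+z)\,\Lambda\!\left(\frac{2-\alpha}{|z|^\alpha}+\varphi(1/|z|)\right)\frac{\D z}{|z|^d}$ is controlled via the $L^{\varepsilon/2}$-bound on $u$ together with \eqref{EA1}--\eqref{EA2}. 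This gives $\tilde\cM^-v\le C$ on $\sB_{1/2}$, so \cref{T3.3} applied to $v/(2^{p}-1)$ yields
$$
|\{v\ge 2^{p}-\tfrac{1}{2}\}\cap\sB_{1/4}|\le C(2^{p})^{-\varepsilon},
$$
which is less than $\tfrac12|\sB_{1/4}|$ once $p$ is large enough; equivalently $|\{\tilde u\ge \tfrac12\}\cap\sB_{1/4}|\ge \tfrac12|\sB_{1/4}|$.

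Undoing the rescaling gives $|\{u\ge u(x_0)/2\}\cap\sB_{d_0/4}(x_0)|\ge c\,d_0^{d}$, and combining with the first-paragraph $L^{\varepsilon/2}$-bound yields
$$
C\;\ge\;\int_{\sB_1}u^{\varepsilon/2}\;\ge\;\bigl(u(x_0)/2\bigr)^{\varepsilon/2}c\,d_0^{d}\;=\;c'\,t_*^{\varepsilon/2}d_0^{d-p\varepsilon/2}.
$$
Our choice $p>2d/\varepsilon$ makes the exponent of $d_0$ strictly negative, and since $d_0\le 3/4<1$ we have $d_0^{d-p\varepsilon/2}\ge 1$; thus $t_*^{\varepsilon/2}\le C''$, contradicting the standing assumption that $t_*$ is arbitrarily large, and completing the argument. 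The main obstacle is the tail estimate for $\tilde\cM^-v$: the $\varphi$-part of the kernel has no intrinsic scaling, but the weak upper scaling \eqref{EA1} supplies exactly the decay needed when rescaling by the small factor $d_0/2$, and hypothesis \eqref{EA4}---giving lower and upper kernel bounds of the same $\varphi$-structure---ensures that $\tilde\cM^{\pm}$ is non-degenerate with respect to the lower-order part, so that \cref{T3.3} applies to $v$ with constants that are universal and independent of the touching scale $d_0$.
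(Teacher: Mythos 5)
Your overall architecture (touch $u$ with a blow-up barrier, identify a touching point $x_0$ near the boundary, truncate, apply the weak-Harnack measure estimate, and contradict largeness of the touching constant) is the same as the paper's, which in turn follows Caffarelli--Silvestre. The final contradiction step is a legitimate variant: you use $\|u\|_{L^{\varepsilon/2}(\sB_1)}\le C$ together with a density estimate on a small ball, while the paper contradicts a level-set measure bound directly; these are equivalent.

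The gap is in the tail estimate for $\tilde\cM^- v$. You claim that the far-field contribution
$\int_{|z|>1/4}\tilde u(y+z)\,\Lambda\bigl(\tfrac{2-\alpha}{|z|^\alpha}+\varphi(1/|z|)\bigr)\tfrac{\D z}{|z|^d}$
can be controlled by the $L^{\varepsilon/2}(\sB_1)$ bound from \cref{T3.4}. That bound cannot do the job, for two reasons. First, the exponent $\varepsilon/2$ coming from the weak-Harnack inequality is in general smaller than $1$, so $\|u\|_{L^{\varepsilon/2}(\sB_1)}\le C$ does not even control $\int_{\sB_1}u$, let alone the convolution of $u$ with the nonintegrable kernel $|{\cdot}|^{-d-\alpha}$ truncated near the origin. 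Second, after undoing the rescaling the tail integral runs over $|\zeta|>d_0/8$ and includes a genuinely \emph{nonlocal} contribution from $\Rd\setminus\sB_1$, about which the local $L^{\varepsilon/2}(\sB_1)$ bound says nothing at all. What the paper actually does (following CS09) is find a second touching point $x_1\in\sB_{1/4}$ where $u$ touches the downward paraboloid $g_\tau(x)=\tau(1-|4x|^2)$ from above, and uses the equation $\tilde\cM^-u(x_1)\le C_0$ \emph{at that point} to produce the key a priori bound
\[
\int_{\Rd}\frac{(u(x_1+y)-2)^+}{|y|^d}\Bigl(\tfrac{2-\alpha}{|y|^\alpha}+\varphi(1/|y|)\Bigr)\,\D y\le C.
\]
This controls the global tail of $u$ with respect to the L\'evy measure; it is then translated to the neighborhood of $x_0$ at the cost of a harmless $(\theta r)^{-d-\alpha}$ factor, using monotonicity of $\varphi$ and \eqref{EA1}. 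Without this second touching-point argument (or some replacement producing a bound on the L\'evy integral of $u$), your estimate for $\tilde\cM^- v$ does not close, and the rest of the proposal cannot proceed.
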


\begin{proof}
We again follow the idea of \cite{CS09}. 
Dividing by $u(0) + C_0$, it is enough to consider $u(0) \leq  1$ and $C_0 = 1$. Fix
$\varepsilon > 0$ from \cref{T3.4} and
let $\gamma = \frac{d}{\varepsilon}$. 
Let
$$
t\df\min\{s\; :\; u(x) \leq h_{s}(x) \df s(1 - \abs x)^{- \gamma} \;\; \text{for all} \; x \in \sB_1\}.
$$
Let $x_0 \in \sB_1$ be such that $u(x_0) = h_t (x_0)$. Let  
$\eta = 1 - \vert x_0 \vert$ be the distance of $x_0$ from $\partial \sB_1$.
We show that $t<C$ for some universal $C$ which in trun, implies that $u(x) < C (1 -  \abs{x})^{-\gamma}$.
This would prove our result.

For $r = \frac{\eta}{2}$, we  estimate the portion of the ball $\sB_r(x_0)$ covered by 
$\{u < \frac{u(x_0)}{2}\}$ and $\{ u > \frac{u(x_0)}{2}\} $. 
Define $A \df  \{ u > \frac{u(x_0)}{2} \} $. Using \cref{T3.4} we then obatin
$$
\vert A \cap \sB_1 \vert \leq C \bigg\lvert \frac{2}{u(x_0)} \bigg\rvert^{\varepsilon} \leq C t^{-\varepsilon}
\eta^d\,,
$$
whereas $\vert \sB_r \vert = \omega_d (\eta/2)^d$. In particular,
\begin{equation}\label{ET5.1A0}
\bigg\lvert \bigg\lbrace   u > \frac{u(x_0)}{2}   \bigg\rbrace  \cap \sB_r(x_0)  \bigg\rvert \leq 
C t^{- \varepsilon} \vert \sB_r \vert .
\end{equation}
So if $t$ is large,
$A$ can cover only a small portion of $\sB_r(x_0)$. We shall show that for some $\delta>0$,
independent of $t$ we have 
$$ \vert \{ u \leq \frac{u(x_0)}{2} \} \cap \sB_r(x_0) \vert \leq  (1- \delta) \vert \sB_r \vert$$
which will provide an upper bound on $t$ completing the proof.

We start by estimating $ \vert \{ u \leq \frac{u(x_0)}{2} \} \cap \sB_{\theta r} (x_0) \vert $ for $ \theta > 0$ small. For every $ x \in \sB_{\theta r}(x_0)$ we have
$$
u(x) \leq\, h_t (x) \,\leq\, t \left( \frac{2\eta - \theta \eta}{2} \right)^{- \gamma} 
\,\leq\, u(x_0) \left( 1 - \frac{\theta }{2} \right)^{- \gamma}\,, 
$$
with $\left( 1 - \frac{\theta }{2} \right)$ close to 1.
Define
$$
v(x) \df \left( 1 - \frac{\theta }{2} \right)^{- \gamma} u(x_0) - u(x)\,.
$$
Then that $v \geq 0$ in $\sB_{ \theta r}(x_0)$, and also $\tilde\cM^{-} v \leq 1$ as 
$\tilde\cM^{+} u \geq 1$.
We would like to
apply \cref{T3.4} to $v$ but $v$ need not be non-negative in the whole
space $\Rd$. Thus we consider $w=v^+$ and find an upper bound of $\tilde\cM^{-} w$.

We already know that
$$
\tilde\cM^{-}v(x) = \int_{\Rd} \frac{\lambda \delta^{+}(v,x,y)}{\abs y ^{d}} 
\left( \frac{ 2 - \alpha}{\abs{y}^{\alpha}} + \varphi({1}/{ \abs y})  \right )
 - \frac{\Lambda \delta^{-}(v,x,y)}{\abs y ^{d}} \left( \frac{ 2 - \alpha}{\abs y ^{\alpha}} 
 + \varphi({1}/{ \abs y}) \right ) \D{y} \leq\, 1\,.
$$
Therefore, for $x \in \sB_{\frac{\theta r}{4}}(x_0)$
\begin{align}\label{ET5.1A}
\tilde\cM^{-}w(x) &= \int_{\Rd} \frac{\lambda \delta^{+}(w,x,y)}{\abs y ^{d}} \left(\frac{ 2 - \alpha}{\abs y ^{\alpha}} + \varphi({1}/{ \abs y})  \right) - \frac{\Lambda \delta^{-}(w,x,y)}{\abs y ^{d}} \left( \frac{ 2 - \alpha}{\abs y ^{\alpha}} + \varphi({1}/{ \abs y}) \right ) \D{y}\nonumber
\\
&\leq\, 1 +2 \int_{\Rd \cap \{ v(x+y) <0 \}}  - \Lambda \frac{v(x+y)}{\abs y^{d}}
 \left( \frac{ 2 - \alpha}{\abs y ^{\alpha}} + \varphi({1}/{ \abs y}) \right ) \D{y}\nonumber
 \\
&\leq 1 +2 \int_{\Rd \setminus \sB_{\frac{\theta r}{2}}(x_0 - x)}  
\Lambda \frac{(u(x+y) - (1 - \frac{\theta}{2})^{- \gamma} u(x_0))^{+}}{\abs y^{d}}
 \left( \frac{ 2 - \alpha}{\abs y ^{\alpha}} + \varphi({1}/{ \abs y}) \right ) \D{y}\,.
\end{align}
So to  find an upper bound we must compute the second expression.
Let us consider the largest value $ \tau > 0 $ such that $ u(x) \geq g_{\tau} \df \tau \left(1 - \vert 4x \vert^{2}\right)$. There must be a point $x_1 \in  \sB_{\frac{1}{4}}$ such that $u(x_1) =\tau (1 - \vert 4x \vert^{2})$. The value of $\tau$ cannot be larger than $1$ since $u(0) \leq 1$. Also truncate $g_{\tau}$ and define $\hat{g}_{\tau} \df g_{\tau} \Ind_{\sB_{\frac{1}{3}}}$ which implies $u(x) \geq \hat{g}_{\tau}(x) \geq g_{\tau}(x)$ for all $x \in \Rd$ and $u(x_1)= \hat{g}_{\tau}(x_1) = g_{\tau}(x_1)$. Thus we have the upper bound
\begin{align*}
&\int_{\Rd} \frac{ \delta^{-}(u,x_1,y)}{\abs{y}^{d}} \left( \frac{ 2 - \alpha}{\abs y ^{\alpha}} + \varphi(1/{ \abs y}) \right ) \D{y}
 \\
&\leq \int_{\Rd} \frac{ \delta^{-}(\hat{g}_{\tau},x_1,y)}{\abs y ^{d}} \left(\frac{ 2 - \alpha}{\abs y ^{\alpha}} + \varphi(1/{ \abs y}) \right ) \D{y}
 \\
&\leq \int_{\sB_1} \frac{ \delta^{-}(g_{\tau},x_1,y)}{\abs y ^{d}} \left( \frac{ 2 - \alpha}{\abs y ^{\alpha}} + \varphi(1/{ \abs y}) \right ) \D{y}    +\int_{\Rd \setminus \sB_1} \frac{ 32}{\abs y ^{d}} 
\left( \frac{ 2 - \alpha}{\abs y ^{\alpha}} + \varphi(\frac{1}{ \abs y}) \right ) \D{y}
 \\
&\leq \int_{\sB_1} \frac{ 32 \abs y^{2}}{\abs y ^{d}} \left(\frac{ 2 - \alpha}{\abs y ^{\alpha}} + 
\frac{\kappa_\circ}{ \abs y^{\beta}} \varphi(1) \right ) \D{y}    + C_1 \leq C_2\,,
\end{align*}
for some constants $C_1, C_2$ dependent only on $(d, \alpha ,  \varphi)$,
where we used following inequality
$$
\hat{g}_{\tau}(x_1 + y) + \hat{g}_{\tau}(x_1 - y) - 2\hat{g}_{\tau}(x_1) \geq \tau \left( 2 \vert 4x_1 \vert^{2} - \vert 4(x_1+y) \vert^{2} -\vert 4(x_1-y) \vert^{2}  \right)=32|y|^2,
$$
for $y \in \sB_1$. Since $\tilde\cM^{-}u(x_1) \leq 1$, we get using the above  estimate that
$$
\int_{\Rd} \frac{ \delta^{+}(u,x_1,y)}{\abs y ^{d}} \left( \frac{2 - \alpha }{\abs y ^{\alpha}} + 
\varphi({1}/{ \abs y}) \right ) \D{y}  \leq\; C\,.
$$
In particular, since $u(x_1) \leq 1$ and $u(x_1 -y) \geq 0$,
$$
\int_{\Rd} \frac{ (u(x_1 + y)- 2)^{+}}{\abs y ^{d}} \left( \frac{ 2 - \alpha}{\abs y ^{\alpha}}
 + \varphi({1}/{ \abs y}) \right ) \D{y} \,\leq\, C\,.
$$
We use this estimate to compute the RHS of \eqref{ET5.1A}. 
Without any loss of generality we may assume that $u(x_0) > 2$, since otherwise t would not be large. We can use the inequality above to get following estimate
 \begin{align*}
& 2 (2 - \alpha)\int_{\Rd \setminus \sB_{\frac{\theta r}{2}}(x_0 - x)} \Lambda \frac{(u(x+y) - (1 - \frac{\theta}{2})^{- \gamma} u(x_0))^{+}}{\abs y^{d + \alpha}}  \D{y} 
\\
&\quad \leq 2 (2 - \alpha)  \int_{\Rd \setminus \sB_{\frac{\theta r}{2}}(x_0 - x)}  
\Lambda \frac{(u(x_1 + x+y - x_1) - 2)^{+}}{\vert y + x - x_1 \vert^{d+\alpha}} \quad \frac{\vert y + x - x_1 \vert^{d+\alpha}}{\abs y^{d + \alpha}}  \D{y}
\\
&\quad\leq C (\theta r)^{-d-\alpha},
 \end{align*}
here we used the fact that $ y\notin \sB_{\frac{\theta r}{2}} (x_0 - x)$ implies $y \notin \sB_{\frac{\theta r}{4}}$. Again, a simple calculation gives
$$
\frac{\vert y + x - x_1 \vert}{\abs y} \leq \frac{\vert y \vert + \vert x - x_1 \vert}{\abs y} \leq 12 (\theta r)^{-1}
$$ 
and using the monotonicity property of $\varphi$,
$$
\frac{\varphi \left({1}/{ \abs y} \right )}{\varphi \left({1}/{ \vert y + x - x_1 \vert} \right )}
 \leq \frac{\varphi \left(  \frac{\vert y + x - x_1 \vert }{\vert y + x - x_1 \vert}  \frac{1}{ \abs y} \right )}{\varphi \left(\frac{1}{ \vert y + x - x_1 \vert} \right )} \leq  \kappa_\circ \left( 12 (\theta r)^{-1} \right)^{\beta},
$$
by \eqref{EA1}. This gives us
\begin{align*}
& 2\Lambda \int_{\Rd \setminus \sB_{\frac{\theta r}{2}}(x_0 - x)} \frac{(u(x+y) - (1 - \frac{\theta}{2})^{- \gamma} u(x_0))^{+}}{\abs y^{d}}   \varphi \left(\frac{1}{ \abs y} \right ) \D{y} 
\\
&\quad \leq 2 \Lambda  \int_{\Rd \setminus \sB_{\frac{\theta r}{2}}(x_0 - x)}  \frac{(u(x_1 + x+y - x_1) - 2)^{+}}{\vert y + x - x_1 \vert^{d}} \quad  \varphi \left({1}/{ \vert y + x - x_1 \vert} \right )  
\\
 &\hspace{10em}  \frac{\vert y + x - x_1 \vert^{n}}{\abs y^{n}} 
 \quad  \frac{\varphi \left({1}/{ \abs y} \right )}{\varphi \left({1}/{ \vert y + x - x_1 \vert} \right )} \D{y}
 \\
&\quad \leq C \kappa_\circ (\theta r)^{-d-\beta} \leq C (\theta r)^{-d-\alpha}\,.
\end{align*}
Thus we obtain from \eqref{ET5.1A}
$$ 
\tilde\cM^{-}w \,\leq\, C_1(\theta r)^{-d-\alpha} \quad \text{in}  \; \sB_{\frac{\theta r}{4}}(x_0).
$$
We apply \cref{T3.4} to $w$ in $B_{\nicefrac{\theta r}{4}}(x_0)$. Recalling that $w(x_0) = ((1- \nicefrac{\theta}{2})^{ - \gamma}-1) u(x_0) $, we have
\begin{align*}
  &\bigg\lvert \bigg\lbrace   u \leq \frac{u(x_0)}{2}   \bigg\rbrace  \cap \sB_{\frac{\theta r}{8}}(x_0)  \bigg\rvert	
  \\		
 & \qquad =   \bigg\lvert \bigg\lbrace   w \geq  u(x_0) ((1- \nicefrac{\theta}{2})^{ - \gamma}-\nicefrac{1}{2})   \bigg\rbrace  \cap \sB_{\frac{\theta r}{8}}(x_0)  \bigg\rvert	
 \\
 & \qquad \leq C (\theta r)^d \left( \left((1- \nicefrac{\theta}{2})^{ - \gamma}-1 \right) u(x_0)  +
  C_1 (\theta r)^{-d - \alpha} (\theta r)^{\alpha} \right)^{\varepsilon}
   \cdot \left[ u(x_0) ((1- \nicefrac{\theta}{2})^{ - \gamma}-\nicefrac{1}{2})  \right]^{- \varepsilon}
  \\
  & \quad \quad \leq C (\theta r)^d \left( \left((1- \nicefrac{\theta}{2})^{ - \gamma}-1 \right)^{\varepsilon} + \theta^{-d \varepsilon} t^{- \varepsilon} \right)\,.
\end{align*}
Now let us choose $\theta > 0$ small enough (independent of $t$) to satisfy
$$
C (\theta r)^d  \left((1- \nicefrac{\theta}{2})^{ - \gamma}-1 \right)^{\varepsilon} \leq\;
 \frac{1}{4} \vert \sB_{\frac{\theta r}{8}}(x_0) \vert\,.
$$
With this choice of $\theta$ if $t$ becomes large,  then we  also have
$$
C (\theta r)^d \theta^{-d \varepsilon} t^{- \varepsilon} \leq \frac{1}{4} \vert \sB_{\frac{\theta r}{8}}(x_0) \vert\,,
$$
and hence,
$$
\bigg\lvert \bigg\lbrace   u \leq \frac{u(x_0)}{2}   \bigg\rbrace  \cap \sB_{\frac{\theta r}{8}}(x_0)  \bigg\rvert \leq \frac{1}{2} \vert \sB_{\frac{\theta r}{8}}(x_0) \vert\,. 
$$
This of course, implies that 
$$
\bigg\lvert \bigg\lbrace   u > \frac{u(x_0)}{2}   \bigg\rbrace  \cap \sB_{\frac{\theta r}{8}}(x_0)  \bigg\rvert \geq C_2 \vert \sB_r \vert,
$$
but this is contradicting to \eqref{ET5.1A0}. Therefore $t$ cannot be large and we finish the proof.
\end{proof}

Mimicking \cref{T5.1} we also obtain the following result which will be useful to establish a boundary Harnack
property. The following also known as the \textit{half Harnack} inequality for subsolutions.
\begin{theorem}\label{T5.2}
Let $u$ be a function continuous in $\overline{\sB_1}$, and satisfy
$$
\int_{\Rd} \frac{\vert u(y)\vert}{1 + \abs y^{d + \alpha}}\, \D{y} 
+ \int_{\Rd} \frac{\vert u(y)\vert}{1 + \abs y^d (\varphi ({1}/{\abs y}))^{-1}}\, \D{y} \leq\, C_0,
$$
and
$$
\tilde\cM^{+} u \geq - C_0 \quad \text{in}\;  \sB_1\,.
$$
Then
$$
u(x) \leq C\,C_0 \,,
$$
for every $x \in \sB_{\frac{1}{2}}$, where the constant $C >0$ depends only on $(d, \lambda,  \Lambda,  \alpha, \varphi)$.
\end{theorem}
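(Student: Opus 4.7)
The plan is to adapt the argument of Theorem~\ref{T5.1} with only the one-sided information $\tilde\cM^+u\geq -C_0$, replacing the pointwise bound $u(0)\leq 1$ used there by the two weighted $L^1$ hypotheses on $u$. After dividing by $C_0$ we may assume $C_0=1$. With $\varepsilon$ the exponent from Theorem~\ref{T3.4}, set $\gamma=d/\varepsilon>d$ and introduce the barrier $h_s(x)=s(1-|x|)^{-\gamma}$ on $\sB_1$. Define
$$
t\df\min\bigl\{s>0\;:\;u(x)\leq h_s(x)\text{ for all }x\in\sB_1\bigr\}.
$$
Since $h_t\leq 2^\gamma t$ on $\sB_{1/2}$, it suffices to bound $t$ by a universal constant.

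Pick $x_0\in\sB_1$ with $u(x_0)=h_t(x_0)$ and set $\eta=1-|x_0|$, $r=\eta/2$. For a small parameter $\theta\in(0,1)$ to be fixed universally, define $v(x)\df(1-\theta/2)^{-\gamma}u(x_0)-u(x)$. Then $v\geq 0$ on $\sB_{\theta r}(x_0)$, and since the extremal operators annihilate constants and $\tilde\cM^-(-u)=-\tilde\cM^+u$, one has $\tilde\cM^-v\leq 1$ in $\sB_1$. Setting $w=v^+$ and arguing as in \eqref{ET5.1A}, for $x\in\sB_{\theta r/4}(x_0)$
$$
\tilde\cM^-w(x)\;\leq\;\tilde\cM^-v(x)+\tilde\cM^+v^-(x)\;\leq\;1+\tilde\cM^+v^-(x),
$$
where the $\tilde\cM^+v^-$ expression receives contributions only from $|y|\geq\theta r/4$. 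Using $v^-(x+y)\lesssim u^+(x+y)+u(x_0)$ and the change of variable $z=x+y$, this tail integral splits into an $\alpha$-stable piece involving $|z-x|^{-d-\alpha}$ and a $\varphi$-piece involving $|z-x|^{-d}\varphi(1/|z-x|)$. The $\alpha$-stable piece is dominated by $C(\theta r)^{-d-\alpha}\int|u(z)|(1+|z|^{d+\alpha})^{-1}\D{z}$, and for the $\varphi$-piece the ratio $\varphi(1/|y|)/\varphi(1/|z|)$ is controlled by a power of $|y|/|z|$ via the weak upper scaling \eqref{EA1}, allowing the second hypothesis to be invoked. The net conclusion is $\tilde\cM^-w\leq C(\theta r)^{-d-\alpha}$ on $\sB_{\theta r/4}(x_0)$.

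The remainder mirrors Theorem~\ref{T5.1}. Since $w(x_0)=\bigl((1-\theta/2)^{-\gamma}-1\bigr)u(x_0)\lesssim\theta\,u(x_0)$ and $u(x_0)\geq t$ (as $\eta\leq 1$), applying Theorem~\ref{T3.4} to $w$ on $\sB_{\theta r/2}(x_0)$ yields, after using $\gamma\varepsilon=d$ to simplify,
$$
\bigl|\{u\leq u(x_0)/2\}\cap\sB_{\theta r/8}(x_0)\bigr|\;\leq\;C(\theta r)^d\bigl(\theta^\varepsilon+\theta^{-d\varepsilon}t^{-\varepsilon}\bigr).
$$
Fix $\theta$ small universally so the first summand is $\leq\tfrac14|\sB_{\theta r/8}(x_0)|$. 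If $t\geq C\theta^{-d}$ then the second summand is also $\leq\tfrac14|\sB_{\theta r/8}(x_0)|$, and so $|\{u>u(x_0)/2\}\cap\sB_{\theta r/8}(x_0)|\gtrsim(\theta\eta)^d$; feeding this into the first integral hypothesis,
$$
1\;\geq\;\int\frac{u^+(y)}{1+|y|^{d+\alpha}}\,\D{y}\;\gtrsim\;u(x_0)(\theta\eta)^d\;=\;\theta^d\,t\,\eta^{d-\gamma}\;\geq\;c\theta^d\,t,
$$
using once more $\eta\leq 1$ and $d<\gamma$. Either way $t$ is bounded by a universal constant.

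The principal difficulty is the tail bound on $\tilde\cM^+v^-$: one must match each kernel component with the corresponding weighted $L^1$ hypothesis on $u$, and the weak upper scaling \eqref{EA1} is essential for rebasing the $\varphi$-weight from the origin to the point $x$. Everything else, notably the bookkeeping of the $\theta$-powers after Theorem~\ref{T3.4}, follows the template of Theorem~\ref{T5.1}.
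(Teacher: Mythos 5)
Your proposal is correct and follows essentially the same route as the paper: barrier $h_t(x)=t(1-|x|)^{-\gamma}$, the decomposition $\tilde\cM^-w\leq\tilde\cM^-v+\tilde\cM^+v^-$, a change of variables matching each kernel component with the corresponding weighted $L^1$ hypothesis (using \eqref{EA1} to rebase the $\varphi$-weight), an application of \cref{T3.4} to $w=v^+$, and a final density contradiction. The only noticeable deviation is the choice of exponent: you carry over $\gamma=d/\varepsilon$ from \cref{T5.1}, whereas the paper takes $\gamma=d$ here (matching the exponent $-1$ coming from Chebyshev applied to the $L^1$ hypothesis rather than the weak-Harnack exponent $-\varepsilon$); both choices close the argument, yours because $\eta^{d-\gamma}\geq1$ for $\eta\leq1$, but $\gamma=d$ makes the $\eta$-powers cancel exactly. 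Also, the displayed inequality $v^-(x+y)\lesssim u^+(x+y)+u(x_0)$ would overshoot the target bound $C(\theta r)^{-d-\alpha}$ if the $u(x_0)$ term were actually retained; since $u(x_0)>0$, the clean inequality $v^-\leq u^+$ holds and is the one implicitly used in your subsequent estimate, so this is only a cosmetic imprecision.
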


\begin{proof}
We follow the approach of \cite{CS11} and \cref{T5.1}. Dividing $u$ by $ C_0$, it is enough to consider $C_0 = 1$. Also, without any loss of generality we may assume that $u$ is positive somewhere in $\sB_1$. Otherwise,
there is nothing to prove.
As before we consider the minimum value of $t$ such that
$$
u(x) \leq h_{t}(x) \df t(1 - \abs x)^{- d} \quad for \; every \; x \in \sB_1,
$$
and find $x_0\in\sB_1$ with $u(x_0)=h_t(x_0)$. Denote by $\eta=1-\abs{x_0}$,  $r=\eta/2$ and
$A=\{u> u(x_0)/2\}$. As shown in \cref{T5.1}, we need to find an upper bound of $t$.

By assumption, we have $u \in L^1(\sB_1)$ and thus
$$
\vert A \cap \sB_1 \vert \leq C \bigg\lvert \frac{2}{u(x_0)} \bigg\rvert \leq C t^{-1} \eta^d\,,
$$
whereas $\vert \sB_r \vert = C \eta^d$, so if $t$ is large, $A$ can cover only a 
small portion of $\sB_r(x_0)$ at most. In particular,
\begin{equation}\label{ET5.2A}
\bigg\lvert \bigg\lbrace   u > \frac{u(x_0)}{2}   \bigg\rbrace  \cap B_r(x_0)  \bigg\rvert\leq C t^{-1} \vert B_r \vert .
\end{equation}
We define
$$
v(x) = \left( 1 - \frac{\theta }{2} \right)^{-d} u(x_0) - u(x)
$$
for small $\theta>0$, and observe that $v \geq 0$ in $\sB_{ \theta r}(x_0)$. Let $w=v^+$.
Repeating the arguments of \cref{T5.1} we find, for $x \in \sB_{\frac{\theta r}{4}}(x_0)$, that
\begin{align*}
\tilde\cM^{-} w(x) 
&\leq 1 +2 \int_{\Rd \cap \{ v(x+y) <0 \}}  - \Lambda \frac{v(x+y)}{\abs y^{d}} 
\left( \frac{ 2 - \alpha}{\abs y ^{\alpha}} + \varphi({1}/{ \abs y}) \right ) \D{y}\,,
\end{align*}
since $v \geq 0$ in $\sB_{ \theta r}(x_0)$ and $x \in \sB_{\frac{\theta r}{4}}(x_0)$, we will have $x+y$ and $x-y$ both in $\sB_{ \theta r}(x_0)$ for all $ y \in \sB_{\frac{\theta r}{2}}$. Now we need to estimate the
integral on the RHS of the above. Note that $u$ need to be non-negative here and thus we can not apply
the technique of cut-off function as done in \cref{T5.1}. So we use the integral condition imposed on
$u$.
\begin{align}\label{ET5.2B}
\tilde\cM^{-} w(x) 
&\leq 1 +2 \int_{\Rd \setminus \sB_{\frac{\theta r}{2}}}  \Lambda \frac{(u(x+y) - (1 - \frac{\theta}{2})^{- d} u(x_0))^{+}}{\abs y^{d}} \left( \frac{ 2 - \alpha}{\abs y ^{\alpha}} + 
\varphi({1}/{ \abs y}) \right ) \D{y}\nonumber
\\
&\leq 1 + 2 \Lambda \int_{\Rd \setminus \sB_{\frac{\theta r}{2}}}   
\frac{u^{+}(x+y)}{\abs y^{d}} \left( \frac{ 2 - \alpha}{\abs y ^{\alpha}} 
+ \varphi({1}/{ \abs y}) \right ) \D{y}\nonumber
\\
&\leq 1 +2 \Lambda \int_{\Rd \setminus \sB_{\frac{\theta r}{2}}(x)}   
\frac{\vert u(x) \vert}{ \vert x - y \vert^{d}} \left( \frac{ 2 - \alpha}{\vert x - y \vert^{\alpha}} + \varphi({1}/{ \vert x -y \vert}) \right ) \D{y}\,.
\end{align}
Using $\vert x-y \vert \geq \frac{\theta r}{2}$ and $\abs x < 1$ we obtain the following estimates
\begin{align*}
\frac{1}{\vert x-y \vert^{d +\alpha}} &= \frac{1}{1 + \abs y^{d+\alpha}} 
\; \cdot \;  \frac{1 + \abs y^{d+\alpha}}{\vert x-y \vert^{d+\alpha}}
  \\
 & \leq \frac{1}{1 + \abs y^{d+\alpha}} \; \cdot \; \left[ \frac{1}{\vert x-y \vert^{d+\alpha}} + \left( \frac{ \abs x + \vert x- y \vert}{\vert x-y \vert} \right)^{d+\alpha} \right] 
 \\
 & \leq \frac{1}{1 + \abs y^{d+\alpha}} \left( \frac{\theta r}{2} \right)^{-d-\alpha} \left[ 1 + 2^{d+\alpha} \right] \leq C (\theta r)^{-d -\alpha}  \frac{1}{1 + \abs y^{d+\alpha}}\,,
\end{align*}
and, since $\frac{\abs y}{\vert x-y \vert} \leq 1+ \frac{\abs x}{\vert x-y \vert}$,
\begin{align*}
\varphi({1}/{\vert x-y \vert}) \leq \kappa_\circ \left( 1+ \frac{\abs x}{\vert x-y \vert} \right)^{\beta} \varphi({1}/{\abs y}), \quad \varphi(1/|x-y|)\leq\varphi(2/r\theta)\leq\,\kappa_\circ (2/r\theta)^\beta \varphi(1),
\end{align*}
giving us
\begin{align*}
\frac{1}{\vert x-y \vert^d (\varphi({1}/{\vert x-y \vert}))^{-1}} 
&= \frac{1}{1+\abs y^d (\varphi({1}/{\abs y}))^{-1}} 
\left[ \frac{\varphi({1}/{\vert x-y \vert})}{\vert x-y \vert^d }
  + \frac{\abs y^d}{\vert x-y \vert^d} \frac{\varphi(1/{\vert x-y \vert})}{\varphi({1}/{\abs y})}\right]
\\
&\leq \frac{1}{1+\abs y^d (\varphi({1}/{\abs y}))^{-1}} \left[ \frac{\kappa_\circ (2/r\theta)^\beta\varphi(1)}{\vert x-y \vert^d}
 +  \kappa_\circ \left( 1+ \frac{\abs x}{\vert x-y \vert} \right)^{ d + \beta} \right]
\\
& \leq C_1 \frac{1}{1+\abs y^d (\varphi({1}/{\abs y}))^{-1}} (\theta r)^{-d-\beta}
\\
& \leq C_1 \frac{1}{1+\abs y^d (\varphi({1}/{\abs y}))^{-1}} (\theta r)^{-d-\alpha},
\end{align*}
for some constant $C_1$ dependent on $d,  \varphi$. Using these estimates in \eqref{ET5.2B} we thus
obtain
$$\tilde\cM^{-} w(x)\,\leq C_3\, (\theta r)^{-d-\alpha}\quad \text{in}\; \sB_{\frac{r\theta}{4}}(x_0)\,.$$
Now we repeat the arguments of \cref{T5.1} and get a contradiction to \eqref{ET5.2A} if $t$ is large.
This completes the proof.
\end{proof}

\section{Boundary Harnack estimate}\label{S-bHar}
We prove a boundary Harnack property in this section for operators in $\tilde\sL$ introduced in \cref{S-Har}.
Being inspired from \cite{RS19} we prove the following result
\begin{theorem}\label{T6.1}
Let $ \Omega\subset \Rd$ be any open set. Assume that there is
$x_0 \in \sB_{\frac{1}{2}}$ and $\varrho > 0$ such that 
$\sB_{2 \varrho}(x_0) \subset \Omega \cap  \sB_{\frac{1}{2}}$.
Then there exists $\delta > 0$, dependent only on $(d, \alpha , \varrho, \varphi, \lambda, \Lambda)$, such that the following statement holds.

Let $u_1, u_2 \in C(\sB_1)$ be viscosity solutions of
\begin{equation}\label{ET6.1A}
\begin{split}
 \tilde\cM^{+} (a u_1 + b u_2) &\geq - \delta (\abs a + \abs b)    \quad \text{in } \quad    \sB_1 \cap \Omega,
 \\
    u_1 = u_2 &= 0  \quad   \text{ in} \quad \sB_1 \setminus \Omega\,,
  \end{split}
  \end{equation}
for all $a, b \in \RR$, and such that
\begin{equation}\label{ET6.1B}
u_i \geq 0 \quad \text{in} \quad \Rd , \qquad \int_{\Rd} \frac{u_i(y)}{1 + \abs y^{d+\alpha}} dy + 
\int_{\Rd} \frac{\vert u_i(y)\vert}{1 + \abs y^d (\varphi({1}/{\abs y}))^{-1}}\, \D{y}\, = 1\,.
\end{equation}
Then
$$
C^{-1} u_2 \leq u_1 \leq C u_2 \quad \quad in \quad \sB_{\frac{1}{2}},
$$
where the constant $C$ depends only on $ (d, \alpha , \varrho, \varphi, \lambda, \Lambda)$.
\end{theorem}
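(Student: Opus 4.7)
The plan is to follow the oscillation-decay strategy of Ros-Oton--Serra \cite{RS19} adapted to our nonlocal class via the Harnack inequality (\cref{T5.1}) and the half-Harnack estimate (\cref{T5.2}). I would first extract from the joint hypothesis the individual one-sided bounds: taking $(a,b)\in\{(\pm 1,0),(0,\pm 1)\}$ yields $\tilde\cM^+ u_i \geq -\delta$ and $\tilde\cM^- u_i \leq \delta$ in $\sB_1\cap\Omega$ for $i=1,2$. Since $u_i\geq 0$ vanishes in $\sB_1\setminus\Omega$, the second difference $\delta(u_i,x,y)\geq 0$ at such a point and both extremal bounds hold automatically there. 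The two weighted-tail conditions in \eqref{ET6.1B} are exactly the hypothesis needed by \cref{T5.2}, which yields a global upper bound $u_i\leq C$ in $\sB_{1/2}$. Combined with \cref{T5.1} inside $\sB_{2\varrho}(x_0)$ and a Harnack chain joining a point of positive mass (present by the normalization) to $x_0$, this also gives the interior positivity $u_i(x_0)\geq c_0 > 0$.

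The heart of the proof is to construct, inductively for $k\geq 0$ with $r_k = \varrho\, 2^{-k}$, real numbers $m_k\leq M_k$ such that
\[
m_k\, u_2(x) \;\leq\; u_1(x) \;\leq\; M_k\, u_2(x) \quad \text{for all } x\in \sB_{r_k}(x_0),
\]
with $M_k-m_k \leq (1-\theta)(M_{k-1}-m_{k-1})$ for some universal $\theta\in(0,1)$. Setting
\[
w^+_k \df u_1 - m_k u_2, \qquad w^-_k \df M_k u_2 - u_1,
\]
the hypothesis with $(a,b)=(1,-m_k)$ and $(a,b)=(-1,M_k)$ respectively gives $\tilde\cM^+ w^\pm_k \geq -C\delta$ in $\sB_1\cap\Omega$, and dually $\tilde\cM^- w^\pm_k \leq C\delta$. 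Both $w_k^\pm$ are non-negative on $\sB_{r_k}(x_0)$ and vanish on $\sB_1\setminus\Omega$. At $x_0$, at least one of them, say $w_k^+$, satisfies $w_k^+(x_0)\geq \tfrac{1}{2}(M_k-m_k)u_2(x_0)\geq \tfrac{c_0}{2}(M_k-m_k)$. Rescaling $\sB_{r_k}(x_0)$ to the unit ball and applying \cref{T5.1} to $w_k^+$ propagates this lower bound to $w_k^+ \geq \theta(M_k-m_k) u_2$ on $\sB_{r_{k+1}}(x_0)$, which amounts to improving $m_k$ by $\theta(M_k-m_k)$ and closing the induction.

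Letting $k\to\infty$ gives a common limit $L = \lim m_k = \lim M_k$ together with a H\"older rate at $x_0$. The pointwise two-sided bound $C^{-1}u_2\leq u_1\leq C u_2$ on $\sB_{1/2}$ is then obtained by combining this oscillation decay at $x_0$ with interior Harnack chains covering $\sB_{1/2}\cap\Omega$: at points near $\partial\Omega$ the zero Dirichlet condition together with the oscillation decay just proven closes the estimate, while points far from $\partial\Omega$ are handled directly by \cref{T5.1}.

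The main obstacle lies in the rescaling inside the induction step. Dilating $\sB_{r_k}(x_0)$ to unit scale transforms $\tilde\cM^\pm$ into scaled operators of the type in \eqref{E2.3}--\eqref{E2.4}, because $\varphi$ only satisfies a weak \emph{upper} scaling (\eqref{EA1}) with no matching lower bound. As in \cref{T3.2} and \cref{L4.1}, one must verify that the constants in the Harnack and half-Harnack inequalities for these scaled operators are uniform in $k$, and that the far tails of $w_k^\pm$ at scale $r_k$ stay controlled by the weighted integrals in \eqref{ET6.1B}. This tail analysis mirrors what is done in the proofs of \cref{T5.1} and \cref{T5.2} and, together with the scale-invariant constants, yields the conclusion with $C$ depending only on $(d,\alpha,\varrho,\varphi,\lambda,\Lambda)$.
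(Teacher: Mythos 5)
Your plan is in the same spirit as the paper's (which simply defers to \cite[Theorem~1.2]{RS19} together with \cref{L6.1,L6.2}), and the preliminary step of extracting the one-sided bounds from the joint hypothesis, as well as the use of \cref{T5.2} for the global upper bound, are fine. However, there is a genuine gap in the inductive step. You want to apply the Harnack inequality (\cref{T5.1}) to $w_k^+ = u_1 - m_k u_2$ after rescaling $\sB_{r_k}(x_0)$ to unit scale, but \cref{T5.1} requires the function to be non-negative \emph{in all of} $\Rd$. The function $w_k^\pm$ is non-negative only on $\sB_{r_k}(x_0)$; it vanishes in $\sB_1\setminus\Omega$, but it may be negative both in $(\sB_1\cap\Omega)\setminus\sB_{r_k}(x_0)$ and in $\Rd\setminus\sB_1$. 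So \cref{T5.1} simply does not apply to $w_k^\pm$ as stated. This is not a technicality that can be waved away by remarking that the tails are controlled: one must truncate, work with $(w_k^\pm)^+$, and quantify the error term $\tilde\cM^+(w_k^\pm)^-$ created by the negative part, then verify that after rescaling by $r_k$ this error is $\order(r_k^\alpha)$ so as not to destroy the iteration. This tail-absorption step is exactly what \cref{L6.1} is designed for (it converts the weighted $L^1$ control from \eqref{ET6.1B} into a bound usable as the ``$C_0$'' error in the Harnack-type estimates), and \cref{L6.2}, not \cref{T5.1}, is the version of the Harnack inequality that is actually invoked in the boundary setting. Your proposal references neither of these lemmas and substitutes the interior Harnack \cref{T5.1}, which the hypotheses of the iteration do not license.

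A secondary issue: your final passage from ``oscillation decay at $x_0$'' to the two-sided comparison on all of $\sB_{1/2}$ via Harnack chains is too loose. What the iteration actually produces (following \cite{RS19}) is H\"older continuity of $u_1/u_2$ up to $\partial\Omega$ inside $\sB_{1/2}$, uniformly over boundary points, and this must be combined with the interior Harnack and the normalization $u_i(x_0)\sim 1$. Doing the decay only ``at $x_0$'' does not control the ratio at boundary points away from $x_0$. These are precisely the two places where the paper's (terse) proof leans on \cref{L6.1,L6.2} and on the structure of \cite[Theorem~1.2]{RS19}; your proposal would need to incorporate both to be complete.
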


\cref{T6.1} is bit stronger than the boundary Harnack principle. 
To see it suppose that for some $L\in\tilde\sL$ 
we have $L u_i=0$ in $\sB_1 \cap \Omega$, in viscosity sense, and $u_i=0$ in $\sB_1 \setminus \Omega$.
Then clearly \eqref{ET6.1A} holds for all $a, b\in\RR$ (cf. \cite[Theorem~5.9]{CS09}). Furthermore, if \eqref{ET6.1B} holds, then
\cref{T6.1} gives us
$$
C^{-1} u_2 \leq u_1 \leq C u_2 \quad \quad in \quad \sB_{\frac{1}{2}}\,.
$$

To prove \cref{T6.1} we need \cref{L6.1,L6.2} below.
\begin{lemma}\label{L6.1}
Assume that $u \in \cC(\sB_1)$ and satisfies $\tilde\cM^{-} u \leq C_0$ in $\sB_1$ in viscosity sense. 
In addition, assume that $u \geq 0$ in $\Rd$. Then 
$$
\int_{\Rd} \frac{u(y)}{1 + \abs y^{d + \alpha}} \D{y} 
+ \int_{\Rd} \frac{u(y)}{1 + \abs y^d (\varphi ({1}/{\abs y}))^{-1}} \D{y} \,\leq\, C\left( \inf_{\sB_{\frac{1}{2}}} u + C_0 \right),
$$
where the constant $C$ depends only on $(d,\lambda, \Lambda, \alpha , \varphi)$.
\end{lemma}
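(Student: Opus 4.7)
The plan is to extract the weighted integrability directly from the viscosity supersolution condition (which is what $\tilde\cM^- u \leq C_0$ asserts in the paper's sign convention), tested at a local minimum of $u$ in $\sB_{1/2}$. The crucial observation is that at such a minimum the constant test function $\psi \equiv u(x_0)$ touches $u$ from below, and the supersolution definition applies directly with no need for sup/inf-convolution regularization. Moreover, the inner integral of the operator (which contains the singularity at $y=0$) vanishes identically because $\delta(\psi,x_0,y)\equiv 0$ for constant $\psi$.

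Normalize by scaling so that $\inf_{\sB_{1/2}} u + C_0 = 1$. Pick $x_0 \in \sB_{1/2-2r}$ to be a point where $u$ attains a local minimum over $\sB_r(x_0)$ with $u(x_0)$ of the same order as $\inf_{\sB_{1/2}} u + C_0$; such a point exists after a standard weak-Harnack argument (\cref{T3.4}) applied at the (possibly boundary-attained) minimizer of $u$ on $\bar\sB_{1/2}$, which transports a comparable-value point into the strict interior at the cost of a universal constant. With $v = u(x_0)$ on $\sB_r(x_0)$ and $v = u$ outside, the supersolution definition gives $\tilde\cM^- v(x_0) \leq C_0$; since the inner contribution is zero, only the outer part survives:
$$\int_{|y|\geq r}\bigl[\lambda\delta^+(u,x_0,y) - \Lambda\delta^-(u,x_0,y)\bigr]\,\frac{K(y)}{|y|^d}\,\D y \;\leq\; C_0,$$
where $K(y) := (2-\alpha)/|y|^\alpha + \varphi(1/|y|)$.

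The key pointwise inequality, $\lambda\delta^+(u,x_0,y) - \Lambda\delta^-(u,x_0,y) \geq \lambda(u(x_0+y)+u(x_0-y)) - 2\Lambda u(x_0)$ valid for any $u\geq 0$ and any $y$ (verified by case analysis on $\sgn\delta$ using $\Lambda \geq \lambda$), combined with the symmetry $y\mapsto -y$, then yields
$$\int_{|y|\geq r} u(x_0+y)\frac{K(y)}{|y|^d}\,\D y \;\leq\; C_r\bigl(u(x_0)+C_0\bigr) \;\leq\; C\bigl(\inf_{\sB_{1/2}}u + C_0\bigr),$$
since $J(r):=\int_{|y|\geq r}K(y)/|y|^d\,\D y$ is finite by \eqref{EA1}--\eqref{EA2}. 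Substituting $z=x_0+y$ unfolds this to a bound on $\int_{|z-x_0|\geq r} u(z) K(z-x_0)/|z-x_0|^d\,\D z$; the weak upper scaling \eqref{EA1} together with $x_0 \in \sB_{1/2}$ makes $K(z-x_0)/|z-x_0|^d$ comparable to the target weight $w(z) := 1/(1+|z|^{d+\alpha})+1/(1+|z|^d\varphi(1/|z|)^{-1})$ for $|z|$ large, while on bounded annuli both are bounded between positive constants, delivering $\int_{|z-x_0|\geq r} u(z) w(z)\,\D z \leq C(\inf_{\sB_{1/2}}u + C_0)$.

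The remaining local piece $\int_{\sB_r(x_0)} u(z) w(z)\,\D z$ is controlled by applying \cref{T3.4} at $x_0$ with radius $r$ (valid because $\sB_{2r}(x_0) \subset \sB_1$), which gives $\|u\|_{L^{\varepsilon/2}(\sB_r(x_0))} \leq C(u(x_0)+C_0)$; H\"older's inequality against the bounded weight $w$ on $\sB_r(x_0)$ finishes. The main obstacle is therefore the preliminary step: ensuring $x_0$ is an interior local minimum (so that the constant test function is valid). This is handled via the weak-Harnack transport mentioned above, but requires a careful argument when the infimum on $\bar\sB_{1/2}$ is attained on $\partial\sB_{1/2}$; if one had mistakenly interpreted $\tilde\cM^- u \leq C_0$ as a subsolution condition, one would be forced to regularize by inf-convolution and contend with a divergent $O(1/\epsilon)$ inner contribution, which is precisely the pitfall avoided here.
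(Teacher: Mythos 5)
Your core idea---test the viscosity supersolution condition at a point where a smooth function touches $u$ from below, note the inner (singular) part of the operator disappears, and read the weighted integrability off the outer part---is the right one and is exactly what the paper does. But the implementation has two genuine gaps, and the paper's proof contains the precise device that fixes both.

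First, you need an interior point where a $\cC^2$ function touches $u$ from below, and you try to produce one as an interior local minimum of $u$ obtained by ``weak-Harnack transport'' from the (possibly boundary-attained) minimizer on $\bar\sB_{1/2}$. This does not work: \cref{T3.4} gives a level-set estimate, not a local minimum, and there is no reason for $u$ to have any local minimum in $\sB_1$ at all (think of a supersolution that decreases strictly toward $\partial\sB_1$). The paper sidesteps this by choosing a fixed smooth cutoff $\chi\in\cC_c^\infty(\sB_{3/4})$ with $\chi\equiv 1$ on $\sB_{1/2}$, and letting $t=\max\{s:u\geq s\chi\}$ (so $t\leq\inf_{\sB_{1/2}}u$). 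Maximality forces a touching point $x_0$ inside $\supp\chi\subset\sB_{3/4}$; $t\chi$ is then the $\cC^2$ test function. This replaces ``find a local minimum of $u$'' (which may not exist) by ``find the touching point of $u$ and $t\chi$'' (which always exists at an interior point).

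Second, your treatment of the local piece $\int_{\sB_r(x_0)}u\,w$ via \cref{T3.4} plus H\"older does not close: \cref{T3.4} only yields $\norm{u}_{L^{\varepsilon/2}(\sB_r)}\lesssim u(x_0)+C_0$, and the universal $\varepsilon$ may well satisfy $\varepsilon/2<1$, in which case $L^{\varepsilon/2}$-control gives no $L^1$-control and H\"older with the bounded weight $w$ is unavailable. The paper again avoids this entirely: since $u-t\chi\geq 0$ everywhere and vanishes at $x_0$, we have $\delta(u-t\chi,x_0,\cdot)\geq 0$ and the supersolution inequality $\tilde\cM^{-}(u-t\chi)(x_0)\leq C_0+Ct$ delivers a finite upper bound on the full (nonnegative-integrand) integral $\int(u-t\chi)(z)\,|x_0-z|^{-d}\bigl((2-\alpha)|x_0-z|^{-\alpha}+\varphi(1/|x_0-z|)\bigr)\D z$, including near $z=x_0$; the comparison estimates \cref{EL6.1A,EL6.1B} then transfer this to the target weight, and adding back $t\chi$ costs only $Ct$. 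In short, the local integrability is a free consequence of the supersolution property---no auxiliary weak-Harnack step is needed---while your route requires it and the tool you invoke is too weak. Your remaining computations (the pointwise inequality $\lambda\delta^{+}-\Lambda\delta^{-}\geq\lambda(u(x_0+y)+u(x_0-y))-2\Lambda u(x_0)$, the symmetrization, and the use of \cref{EA1,EA2} to compare kernels shifted by $x_0$) are all sound and mirror the paper.
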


\begin{proof}
We need few basic estimates. We show that there exists a constant $\kappa>0$ such that
for any $x_0\in\sB_{\frac{3}{4}}$ and $z\in\Rd$ we have
\begin{align}
\vert x_0 - z \vert^{d + \alpha} &\leq \kappa ( 1 + \abs z^{d+\alpha}),\label{EL6.1A}
\\
\vert x_0 - z \vert^{d } (\varphi({1}/{\vert x_0 - z \vert}))^{-1} &\leq \kappa 
( 1 + \abs z^{d} (\varphi({1}/{\abs z}))^{-1})\,.\label{EL6.1B}
\end{align}
\eqref{EL6.1A} is trivial since $\vert x_0 + z \vert \leq 1 + \abs z$ implies 
$\vert x_0 - z \vert^{d + \alpha} \leq 2^{d + \alpha} ( 1 + \abs z^{d+\alpha})$.
On the other hand 
$$
\frac{1}{\abs z} \leq \left( \frac{1 + \abs z}{\abs z} \right) \frac{1}{\vert x_0 - z \vert}\,,
$$
implies
\begin{align*}
\varphi \left( \frac{1}{\abs z} \right) 
&\leq \varphi \left( \left( \frac{1 + \abs z}{\abs z} \right) \frac{1}{\vert x_0 - z \vert} \right)
\\
&\leq \kappa_\circ  \left( \frac{1 + \abs z}{\abs z} \right)^{\beta} \varphi \left( \frac{1}{\vert x_0 - z \vert} \right)\,.
\end{align*}
Thus
\begin{equation}\label{EL6.1C}
(\varphi\left({1}/{\vert x_0 - z \vert} \right))^{-1} 
\leq \kappa_\circ  \left( \frac{1 + \abs z}{\abs z} \right)^{\beta} (\varphi\left({1}/{\abs z} \right))^{-1}\,.
\end{equation}
Let $\abs z \leq 1$. Then using \eqref{EL6.1C} we get
\begin{align*}
\vert x_0 - z \vert^d  \left(\varphi({1}/{\vert x_0 - z \vert})\right)^{-1}
&\leq (1+ \abs z)^{d+\beta} \left(\varphi({1}/{\vert x_0 - z \vert})\right)^{-1}
\\
&\leq 2^{d+\beta} (1+ \abs z^{d+\beta} ) \left(\varphi({1}/{\vert x_0 - z \vert})\right)^{-1}
\\
&\leq 2^{d+\beta} \left( \left(\varphi({1}/{\vert x_0 - z \vert})\right)^{-1} + \abs z^{d+\beta}  \left(\varphi({1}/{\vert x_0 - z \vert})\right)^{-1} \right)
\\
&\leq 2^{d+\beta} \left( \kappa + \abs z^{d+\beta}  \left(\varphi({1}/{\vert x_0 - z \vert})\right)^{-1} \right)
\\
&\leq 2^{d+\beta} \left( \kappa + \kappa_\circ 2^{\beta} \abs z^{d}  (\varphi( 1/{\abs z}))^{-1} \right)\,,
\end{align*}
where $\kappa= \max \left\lbrace (\varphi({1}/{2}))^{-1}, 1 \right\rbrace$. Here we use 
$(\varphi\left({1}/{\vert x_0 - z \vert} \right))^{-1} \leq (\varphi({1}/{2}))^{-1}$, since $\vert x_0 - z \vert < 2$.
Again, for $\abs z > 1$, we use \eqref{EL6.1C} to obtain
\begin{align*}
\vert x_0 - z \vert^d  \left(\varphi({1}/{\vert x_0 - z \vert})\right)^{-1}
&\leq ( 1+ \abs z )^d \kappa_\circ  \left( \frac{1 + \abs z}{\abs z} \right)^{\beta} 
(\varphi\left({1}/{\abs z} \right))^{-1}
 \\
&\leq \kappa_\circ 2^{\beta} (1 + \abs z^d)(\varphi\left({1}/{\abs z} \right))^{-1}
\\
&\leq \kappa_\circ 2^{\beta+1} \abs z^d	(\varphi\left({1}/{\abs z} \right))^{-1}
\\
&\leq \kappa_\circ 2^{\beta+1} (1+ \abs z^d	(\varphi\left({1}/{\abs z} \right))^{-1})\,.
\end{align*}
This gives us \eqref{EL6.1B}.

Let $\chi \in \cC_c^{\infty}(\sB_{\frac{3}{4}})$ be such that $0\leq \chi\leq 1$
and $\chi =1$ in $\sB_{\frac{1}{2}}$.
Let $t >0$ be the maximum value for which $u \geq t\chi$. It is easily seen
that $t \leq \inf_{\sB_{\frac{1}{2}}} u$. 
Since $u$ and $\chi$ are continuous in $\sB_1$ there exists $x_0 \in \sB_{\frac{3}{4}}$ such that 
$u(x_0) = t\chi(x_0)$.
We also get
$$
\tilde\cM^{-}(u - t\chi)(x_0) \leq \tilde\cM^{-}u(x_0) - t \tilde\cM^{-}\chi \leq C_0 + C t
\quad \text{in}\; \sB_1\,.
$$
On the other hand, since $u - t \chi \geq 0$ in $\Rd$ and $(u - t\chi)(x_0) = 0$, we also obtain
from \eqref{EL6.1A}-\eqref{EL6.1B}
\begin{align*}
\tilde\cM^{-}(u - t\chi)(x_0) 
&= 2 \lambda \int_{\Rd} \frac{u(z) - t \chi(z)}{\vert x_0 - z \vert^d} \left( \frac{ 2 - \alpha}{\vert x_0 - z \vert^{\alpha}} + \varphi({1}/{ \vert x_0 - z \vert}) \right ) \D{z}
\\
&\geq 2  (2 - \alpha) \lambda \int_{\Rd} \frac{u(z) - t \chi(z)}{\vert x_0 - z \vert^{d + \alpha}}  \D{z}
 + 2 \lambda \int_{\Rd} \frac{u(z) - t \chi(z)}{\vert x_0 - z \vert^d (\varphi({1}/{ \vert x_0 - z \vert}))^{-1}  }  \D{z}
  \\
&\geq 2  (2 - \alpha) \lambda \kappa \int_{\Rd} \frac{u(z) - t \chi(z)}{  1 + \abs z^{d + \alpha} } \D{z}
 + 2 \lambda \kappa \int_{\Rd} \frac{u(z) - t \chi(z)}{ 1 + \abs z^{d} (\varphi({1}/{\abs z}))^{-1} }  \D{z}
  \\
&\geq C_1 \left( \int_{\Rd} \frac{u(z) }{  1 + \abs z^{d + \alpha} } \D{z} + 
 \int_{\Rd} \frac{u(z) }{ 1 + \abs z^{d} (\varphi({1}/{\abs z}))^{-1} } \D{z} \right) - C_2 t\,,
\end{align*}
for some constants $C_1, C_2$.
Combining we get
$$
(C+C_2)\inf_{\sB_{\frac{1}{2}}} u \geq (C+C_2) t \geq - C_0 + C_1 \left( \int_{\Rd} \frac{u(z) }{  1 + \abs z^{d + \alpha} } \D{z}  +  \int_{\Rd} \frac{u(z) }{ 1 + \abs z^{d} (\varphi({1}/{\abs z}))^{-1} } \D{z}\right)\,,
$$
and the result follows.
\end{proof}

Using \cref{T5.2,L6.1} we obtain the following
\begin{lemma}\label{L6.2}
Let $\Omega \subset \Rd$ be any open set. Suppose there exists
$x_0 \in \sB_{\frac{1}{2}}$ and $ \varrho > 0$ such that 
$\sB_{2 \varrho }(x_0) \subset \Omega \cap \sB_{\frac{1}{2}}$. Denote by $D = \sB_{ \varrho }(x_0)$.
Let $u \in \cC(B_1)$ be a viscosity solution of
\begin{equation*}
\begin{split}
 \tilde\cM^{+} u \geq - C_0    \quad \text{and } \quad  & \tilde\cM^{-} u \leq C_0 \quad  \text{ in} 
 \quad \sB_1 \cap \Omega,
 \\
    & \quad  u = 0   \quad \quad  \text{ in} \quad \sB_1 \setminus \Omega\,.
  \end{split}
\end{equation*}
Assume in addition, that $u \geq 0$ in $\Rd$. Then
$$
\sup_{\sB_{\frac{3}{4}}} u \leq\, C \left( \inf_{D} u + C_0 \right),
$$
where constant $C$ depends only on $(d, \lambda, \Lambda, \alpha, \varphi, \varrho )$.
\end{lemma}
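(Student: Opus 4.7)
The plan is to chain together \cref{L6.1} and \cref{T5.2}: the former turns a lower bound on $u$ over a subset of $\sB_{\frac{1}{2}}$ into upper bounds on the weighted tail integrals of $u$ over $\Rd$, and the latter turns such integral bounds into a pointwise upper bound on $u$. The one wrinkle is that \cref{L6.1} controls things via $\inf_{\sB_{\frac{1}{2}}}u$, whereas the conclusion of \cref{L6.2} requires $\inf_D u$ with $D=\sB_\varrho(x_0)$ possibly much smaller; that calls for a minor strengthening of \cref{L6.1}.

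First, I would repeat the proof of \cref{L6.1} but with the cutoff $\chi\in\cC_c^{\infty}(\sB_{2\varrho}(x_0))$ chosen so that $0\leq\chi\leq 1$ and $\chi=1$ on $D$. Then the maximum $t$ for which $u\geq t\chi$ in $\Rd$ satisfies $t\leq\inf_D u$, and the touching point $\bar x$ with $u(\bar x)=t\chi(\bar x)$ lies in $\overline{\sB_{2\varrho}(x_0)}\subset\sB_{\frac{1}{2}}\cap\Omega\subset\sB_1$, so the hypothesis $\tilde\cM^- u\leq C_0$ is available at $\bar x$. The pointwise comparisons \eqref{EL6.1A}--\eqref{EL6.1B} still apply since $\bar x\in\sB_{\frac{3}{4}}$, and the upper bound on $|\tilde\cM^-\chi|$ now depends on $\varrho$ through the $\cC^2$-norm of $\chi$ and is absorbed into a constant $C_\varrho$. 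The rest of the argument from \cref{L6.1} then delivers
\[
\int_{\Rd}\frac{u(y)}{1+|y|^{d+\alpha}}\,\D{y}+\int_{\Rd}\frac{u(y)}{1+|y|^d(\varphi(1/|y|))^{-1}}\,\D{y}\;\leq\;C_\varrho\bigl(\inf_D u + C_0\bigr).
\]

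Second, I would feed this bound into \cref{T5.2}. Although \cref{T5.2} is stated with conclusion on $\sB_{\frac{1}{2}}$, its proof actually establishes the stronger pointwise estimate $u(x)\leq t(1-|x|)^{-d}$ throughout $\sB_1$, with $t$ controlled linearly by the sum of the two weighted integrals and $C_0$. Restricting to $\sB_{\frac{3}{4}}$ incurs only the factor $(1-\tfrac{3}{4})^{-d}=4^d$, giving
\[
\sup_{\sB_{\frac{3}{4}}}u\;\leq\;C\Bigl(\int_{\Rd}\tfrac{u(y)}{1+|y|^{d+\alpha}}\,\D{y}+\int_{\Rd}\tfrac{u(y)}{1+|y|^d(\varphi(1/|y|))^{-1}}\,\D{y}+C_0\Bigr).
\]
Combining this with the integral bound from step one yields $\sup_{\sB_{\frac{3}{4}}}u\leq C(\inf_D u+C_0)$ with $C$ depending only on $(d,\lambda,\Lambda,\alpha,\varphi,\varrho)$, as required.

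The first step is where the main bookkeeping sits: one must check that narrowing the support of $\chi$ from $\sB_{\frac{3}{4}}$ to $\sB_{2\varrho}(x_0)$ does not disturb \eqref{EL6.1A}--\eqref{EL6.1B}, and that the resulting $\varrho$-dependence of $\|\tilde\cM^-\chi\|_{L^\infty}$ is benign. Both are routine because \eqref{EL6.1A}--\eqref{EL6.1B} depend only on $\bar x\in\sB_{\frac{3}{4}}$, and the $\varrho$-dependence is merely quantitative. The second step is then essentially a direct invocation of \cref{T5.2}.
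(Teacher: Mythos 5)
Your proposal follows the same two-step structure as the paper: use \cref{L6.1} (localized to $\sB_{2\varrho}(x_0)$) to bound the weighted tail integrals of $u$ by $\inf_D u + C_0$, then invoke \cref{T5.2} to convert these integral bounds into the pointwise estimate on $\sB_{\frac{3}{4}}$. Your way of localizing — shrinking the support of the cutoff $\chi$ to $\sB_{2\varrho}(x_0)$ so that the touching point lies inside $\Omega\cap\sB_{\frac{1}{2}}$ where $\tilde\cM^- u\leq C_0$ is actually available — is a clean realization of what the paper's phrase ``using \cref{L6.1} in the ball $\sB_{2\varrho}(x_0)$'' must mean, and your observation that the proof of \cref{T5.2} already gives $u(x)\leq t(1-|x|)^{-d}$ on all of $\sB_1$ (so restricting to $\sB_{\frac{3}{4}}$ only costs the factor $4^d$) is a tidy substitute for the paper's ``standard covering argument.''

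There is one step you left implicit that the paper states up front and that is genuinely needed: \cref{T5.2} (or its proof, which you are invoking) requires $\tilde\cM^+ u\geq -C_0$ on \emph{all} of $\sB_1$, because the touching point for $h_t$ in that argument can fall anywhere in $\sB_1$, not just in $\Omega$. The hypotheses of \cref{L6.2} only assert $\tilde\cM^+ u\geq -C_0$ in $\sB_1\cap\Omega$. The extension is easy — since $u\geq 0$ in $\Rd$ and $u=0$ in $\sB_1\setminus\Omega$, every point of $\sB_1\setminus\Omega$ is a global minimum of $u$, so for any admissible test function one has $\delta^-\equiv 0$ and hence $\tilde\cM^+ u\geq 0\geq -C_0$ there in the viscosity sense — but it should be said; as written you apply \cref{T5.2} to a function whose subsolution inequality is only hypothesized on a subset of the required domain. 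With that sentence added, the argument is complete and matches the paper's.
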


\begin{proof}
Since $u \geq 0$ in $\sB_1$ and $\tilde\cM^{+}u \geq -C_0$ in $\sB_1 \cap \left\lbrace u > 0\right\rbrace$, we
have $\tilde\cM^{+}u \geq -C_0$ in all of $\sB_1$. Thus, by \cref{T5.2} and a standard
covering argument, we have
$$
 \sup_{\sB_{\frac{3}{4}}} u \leq\,  C \left( \int_{\Rd} \frac{ u(y)}{1 + \abs y^{d + \alpha}}\D{y}
  + \int_{\Rd} \frac{ u(y)}{1 + \abs y^d (\varphi ({1}/{\abs y}))^{-1}} \D{y} + C_0 \right)\,.
$$
Again, using \cref{L6.1} in the ball $\sB_{2 \varrho }(x_0)$, we get
$$
\int_{\Rd} \frac{ u(y)}{1 + \abs y^{d + \alpha}} \D{y} + \int_{\Rd} \frac{ u(y)}
{1 + \abs y^d (\varphi({1}/{\abs y}))^{-1}} \D{y} \leq C \left( \inf_{D} u + C_0 \right)\,,
$$
where $D = B_{ \varrho }(x_0)$. Combining the previous estimates, the result follows.
\end{proof}

Finally, we prove \cref{T6.1}
\begin{proof}[Proof of \cref{T6.1}]
Proof follows by following the arguments of \cite[Theorem~1.2]{RS19} and using \cref{L6.1,L6.2}.
\end{proof}

\section*{Acknowledgements}
The research of Anup Biswas was supported in part by DST-SERB grants EMR/2016/004810 and MTR/2018/000028.
Mitesh Modasiya is partially supported by CSIR PhD fellowship (File no. 09/936(0200)/2018-EMR-I ).

%

\begin{thebibliography}{99}
\bibitem{JB15} J. Bae. Regularity for fully nonlinear equations driven by spatial-inhomogeneous nonlocal operators. Potential Anal. 43 (2015), no. 4, 611--624.

\bibitem{BCC} G. Barles, E. Chasseigne, C. Imbert. On the Dirichlet Problem for Second-Order Elliptic Integro-Differential Equations, Indiana Univ. Math. J.,57(2008), no.1, 213--246.

\bibitem{BC08} G. Barles, C. Imbert. Second-Order Elliptic Integro-Differential Equations:  Viscosity Solutions’ Theory Revisited, Ann.  Inst.  H.  Poincar  Anal.  Non  Linaire 25(2008), no. 3, 567--585.

\bibitem{CC95} L. A. Caffarelli, X. Cabr\'{e}. Fully nonlinear elliptic equations. American Mathematical Society Colloquium Publications, 43. American Mathematical Society, Providence, R.I., 1995.

\bibitem{CS09} L. Caffarelli, L. Silvestre. Regularity theory for fully nonlinear integro-differential equations, \emph{Comm. Pure Appl. Math.} 62 (2009), 597--638.

\bibitem{CS11a} L. Caffarelli, L. Silvestre. Regularity results for nonlocal equations by approximation, \emph{Arch. Ration. Mech. Anal.} 200(1) (2011), 59--88.

\bibitem{CS11}  L. Caffarelli, L. Silvestre. The Evans-Krylov theorem for nonlocal fully nonlinear equations, \emph{Ann. of Math.} 174 (2011), 1163--1187.

\bibitem{CLD}H. Chang Lara, G. D\'{a}vila. Regularity for solutions of nonlocal, nonsymmetric equations. Ann. Inst. H. Poincar\'{e} Anal. Non Linéaire 29 (2012), no. 6, 833--859.

\bibitem{KM17} M. Kassmann, A. Mimica. Intrinsic scaling properties for nonlocal operators. J. Eur. Math. Soc. (JEMS) 19 (2017), no. 4, 983--1011.

\bibitem{KL12} Y.C. Kim and K.A. Lee. Regularity results for fully nonlinear integro-differential operators with nonsymetric positive kernels, Manuscripta mathematica 139(2012), 291--319.

\bibitem{KL20} M. Kim, K. A. Lee.  Regularity for fully nonlinear integro-differential operators with kernels of variable orders, to appear in Nonlinear Analysis, arxiv.org/abs/1805.07955

\bibitem{KKL16} S. Kim, Y. C. Kim, K. A. Lee. Regularity for fully nonlinear integro-differential operators with regularly varying kernels. Potential Anal. 44 (2016), no. 4, 673--705.
 
\bibitem{K13} D. Kriventsov. $C^{1, \alpha}$ interior regularity for nonlinear nonlocal elliptic equations with rough kernels, 
\emph{Comm. Partial Differential Equations} 38 (2013), 2081--2106.


\bibitem{M19} C. Mou. Existence of $C^\alpha$ solutions to integro-PDEs. Calc. Var. Partial Differential Equations 58 (2019), no. 4, Paper No. 143, 28 pp.

\bibitem{P66} C. Pucci, Maximum and minimum first eigenvalues for a class of elliptic operators, Proc.
Amer. Math. Soc. 17 (1966), 788--795.

\bibitem{RS16} X. Ros-Oton, J. Serra. Boundary regularity for fully nonlinear integro-differential equations, \emph{Duke Math. J.} 165 (2016), 2079--2154.

\bibitem{RS17} X. Ros-Oton, J. Serra. Boundary regularity estimates for nonlocal elliptic equations in $C^1$ and $C^{1,\alpha}$ domains.
\emph{Ann. Mat. Pura Appl.} (4) 196 (2017), no. 5, 1637--1668.

\bibitem{RS19} X. Ros-Oton, J. Serra. The boundary Harnack principle for nonlocal elliptic operators in non-divergence form. Potential Anal. 51 (2019), no. 3, 315--331.

\bibitem{SSV} R. Schilling, R. Song and Z. Vondrav{c}ek. Bernstein Functions, Walter de Gruyter, 2010.

\bibitem{S15} J. Serra. Regularity for fully nonlinear nonlocal parabolic equations with rough kernels, \emph{Calc. Var. Partial Differential Equations} 54 (2015), 615--629.


\end{thebibliography}

\end{document}